\newtheorem{Thm}{Theorem}
\newtheorem{Lem}{Lemma}
\theoremstyle{definition}
\newtheorem{Ex}{Example}
\theoremstyle{remark}
\newcommand{\nat}{\mathbb{N}}
\newcommand{\im}{\mathop{\rm Im}\nolimits}         
\newcommand{\cpx}[1]{#1_{\bullet}}                 
\newcommand{\abs}[1]{| #1| }                       
\newcommand{\multideg}{\mathop{\mathrm{deg}_{\nat^n}}}
\newcommand{\digraph}[1]{\Gamma_{#1}}
\newcommand{\morsegraph}[2]{\Gamma_{#1}^{#2}}
\newcommand{\nbhd}{\mathop{\rm nbhd}}   
\newcommand{\pnbhd}{\mathop{\rm pnbhd}} 
\newcommand{\supp}{\mathop{\rm supp}\nolimits}
\renewcommand{\epsilon}{\varepsilon}
\renewcommand{\phi}{\varphi}
\newcommand{\card}[1]{|#1|}
\newcommand{\resbasis}[2]{(#1|#2)}           
\begin{document}

\title[Multiplicative edge ideal resolutions]{The minimal resolution of a cointerval edge ideal is multiplicative }

\author{Emil Sk\"oldberg}
\address{School of Mathematics, Statistics and Applied Mathematics,
  National University of Ireland, Galway, Ireland}
\email{emil.skoldberg@nuigalway.ie}
\urladdr{http://www.maths.nuigalway.ie/~emil/}
\date{\today}

\subjclass[2010]{13D02}

\begin{abstract}
  We show that the minimal resolution of the quotient of the
  polynomial algebra over a field by a cointerval edge ideal can be
  given the structure of a DG-algebra.
\end{abstract}

\maketitle

\section{Introduction}

To a simple graph $G$ on the vertex set $[n] = \{1, \ldots, n\}$,
one can associate an
ideal $I_G$ in the polynomial algebra $S=k[x_1,\dots,x_n]$ over the
field $k$, by letting $I_G$ be generated by all monomials $x_ix_j$
such that $ij$ is an edge in $G$; this ideal is known as the
\emph{edge ideal} of $G$. In recent years, the study of edge ideals
has enjoyed a great deal of popularity, and several authors have
worked on relating the graph-theoretical properties of $G$ to the
algebraic properties of $I_G$.

In this paper, we study the minimal resolution of $R_G = S/I_G$ in the
case when $G$ is a cointerval graph, which is a graph that is the
complement of an interval graph. The resolution can be obtained as a
special case either of results by Chen~\cite{chen:edge_ideals}, or by
Dochtermann and Engstr\"om~\cite{dochtermann-engstrom:cointerval_ideals}.
Chen constructs the minimal resolution of $R_G$ for all complements of
chordal graphs, and since every interval graph is chordal, cointerval
ideals are covered.  Dochtermann and Engstr\"om construct the minimal
resolution of $I_G$ for all cointerval $d$-hypergraphs; our cointerval
graphs being the case of $d = 2$.

In section~\ref{sec:resolution}, we describe the resolution, in
section~\ref{sec:homotopy}
we use algebraic Morse theory to construct a contracting homotopy of
the minimal resolution $\cpx{F}$, and in section~\ref{sec:multiplicative}
we use this contracting homotopy to construct a map
$\mu: \cpx{F} \otimes_S \cpx{F} \rightarrow \cpx{F}$ which we show
gives a commutative and associative multiplication on $\cpx{F}$ making
it into a DG-algebra.

Not every cyclic module $S/I$ has the property that its minimal
resolution is multiplicative, see Avramov~\cite{avramov:obstructions}
for results on homological obstructions to the existence
of DG-algebra structures, as well as examples of ideals $I$ such that
$S/I$ does not have a multiplicative minimal resolution.
For a good survey of much of the early works on the existence and
non-existence of multiplicative structures on resolutions, see
Miller~\cite{miller:multiplicative_resolutions}.

Nevertheless, several classes of resolutions of monomial ideals have
been found to be multiplicative. Gemeda~\cite{gemeda:multiplicative}
and  Fr\"oberg~\cite{froberg:complex_constructions} have independently
shown that the Taylor resolution of a monomial ideal is
multiplicative; Peeva~\cite{peeva:borelfixed} has shown that for $I$ a
stable monomial ideal, the minimal resolution of $S/I$ is multiplicative,
and Sk\"oldberg~\cite{skoldberg:in_lin_syz} has shown the corresponding
result for matroidal ideals.

\section{The resolution and its contracting homotopy} \label{sec:resolution}

An \emph{interval graph} is a graph whose vertices correspond to
intervals of the real line, and where two vertices are adjacent if the
corresponding intervals overlap. A \emph{cointerval graph} is the
complement of an interval graph.

\begin{Ex} \label{ex:myex}
  Consider the intervals $I_1 = [0,3]$, $I_2 = [0,1]$, $I_3 = [2,3]$
  $I_4 = [4,5]$ as depicted below:
  \begin{center}
    \begin{tikzpicture}
      \node[] (a1) at (0,0) {};
      \node[] (b1) at (3,0) {};
      \node[] (a2) at (0,-0.5) {};
      \node[] (b2) at (1,-0.5) {};
      \node[] (a3) at (2,-0.5) {};
      \node[] (b3) at (3,-0.5) {};
      \node[] (a4) at (4,0) {};
      \node[] (b4) at (5,0) {};

      \draw (a1) node [above left] {$I_1$};
      \draw [line width=0.8] (0,0) -- (3,0);
      \draw (a2) node [below left] {$I_2$};
      \draw [line width=0.8] (0,-0.5) -- (1,-0.5);
      \draw (a3) node [below left] {$I_3$};
      \draw [line width=0.8] (2,-0.5) -- (3,-0.5);
      \draw (b4) node [above right] {$I_4$} ;
      \draw [line width=0.8] (4,0) -- (5,0);
\end{tikzpicture}
  \end{center}
  The corresponding cointerval graph $G$ is thus
  \begin{center}
    \begin{tikzpicture}
      \node[] (p1) at (-1,1) {};
      \node[] (p2) at (1,1) {};
      \node[] (p3) at (1,-1) {};
      \node[] (p4) at (-1,-1) {};

      \fill(p1) circle (3pt) node[above left] {1} ;
      \fill(p2) circle (3pt) node[above right] {2} ;
      \fill(p3) circle (3pt) node[below right] {3};
      \fill(p4) circle (3pt) node[below left] {4};

      \draw (p1) -- (p4) (p2) -- (p3) (p2) -- (p4) (p3) -- (p4);
    \end{tikzpicture}
  \end{center}
\end{Ex}

We will now describe the minimal resolution $\cpx{F}$ of $R_G$ for $G$
a cointerval graph. Dochtermann and Engstr\"om have constructed a
polyhedral complex that supports the minimal resolution of a
cointerval $d$-hypergraph; the resolution we will study is a special
case of their construction. It is not hard to see that an interval
graph is chordal, so the resolution $\cpx{F}$ is also a special case
of Chen's construction of the minimal resolution of $R_G$ for $G$ such
that its complement $\bar{G}$ is chordal.

We will in the following assume that the vertex set is $[n]$ and that
the vertices are ordered such that if the vertex $i$ corresponds to
the interval $[a_i,b_i]$, then $a_i \leq a_j$ whenever $i<j$.

For $i$ a vertex of $G$, its \emph{neighbourhood} $\nbhd(i)$ is the
set of all vertices $j$ such that $ij \in E(G)$. Following Chen, we
also define its \emph{pre-neighbourhood} $\pnbhd(i)$ to be all $j$ in
$\nbhd(i)$ with $j<i$. We can then make the following observation.

\begin{Lem} \label{lem:pnbhd}
  Let $i$ and $j$ be vertices in $G$ with $i<j$. Then
  $\pnbhd(i) \subseteq \pnbhd(j)$.
\end{Lem}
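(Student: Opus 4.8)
The plan is to unwind the definitions into the geometry of the intervals and then read off the containment directly. Write $[a_i,b_i]$ for the interval attached to vertex $i$, so that by our chosen ordering $a_k \leq a_l$ whenever $k < l$. Since $G$ is the complement of an interval graph, an edge $kl \in E(G)$ is precisely a pair of vertices whose intervals are \emph{disjoint}, i.e.\ $b_k < a_l$ or $b_l < a_k$. I would record this translation first, as everything else is a consequence of it together with the monotonicity of the left endpoints.

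The key simplification, and the one step that does any work, is to observe that for a vertex $k$ lying in $\pnbhd(i)$ the two alternatives in the disjointness condition collapse to one. By definition $k \in \pnbhd(i)$ means $k < i$ and $ki \in E(G)$; but $k < i$ forces $a_k \leq a_i$, so the possibility $b_i < a_k$ would give $b_i < a_k \leq a_i$, contradicting $a_i \leq b_i$. Hence the disjointness must come from the other inequality, and we obtain the one-sided description
\[
  \pnbhd(i) = \set{k : k < i \text{ and } b_k < a_i}.
\]
I expect this reduction to be the main (indeed the only) obstacle: once the condition is written in this form the lemma is immediate, and the work is entirely in noticing that the vertex ordering rules out one of the two ways for the intervals to be disjoint.

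With this description in hand I would finish as follows. Fix $i < j$ and take any $k \in \pnbhd(i)$, so $k < i$ and $b_k < a_i$. From $k < i < j$ we get $k < j$, and from $i < j$ the ordering gives $a_i \leq a_j$, whence $b_k < a_i \leq a_j$. Both defining conditions for membership in $\pnbhd(j)$ are therefore satisfied, so $k \in \pnbhd(j)$; since $k$ was arbitrary, $\pnbhd(i) \subseteq \pnbhd(j)$.
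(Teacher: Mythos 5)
Your proof is correct and follows essentially the same route as the paper's: translate membership in $\pnbhd(i)$ into disjointness of intervals, use the ordering of left endpoints to reduce this to the single inequality $b_k < a_i$, and then chain $b_k < a_i \leq a_j$ to conclude. You spell out more explicitly why the ordering rules out the alternative $b_i < a_k$, which the paper leaves implicit, but the argument is the same.
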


\begin{proof}
  If $i < j$ and $k \in \pnbhd(i)$, it
  means that $[a_k,b_k]\cap[a_i,b_i] = \emptyset$, and thus
  $b_k < a_i \leq a_j$, so $k \in \pnbhd(j)$.
\end{proof}

The sets $B_i$ which will consist of the basis elements of the resolution are
now defined as follows: for the degree 0 part we let
$B_0 = \{1\}$ and for the higher degrees we let $B_d$ consist of the symbols
$\resbasis{\sigma}{\tau}$ where $\sigma, \tau \subseteq  [n]$ such
that (1) $\sigma$ and $\tau$
are disjoint and nonempty with $\card{\sigma\cup \tau}=d + 1$,
(2) $\max \sigma < \min \tau$, and (3) $\{i, \min \tau\} \in E(G)$ for all
$i \in \sigma$.

Now we can set $F_i = \bigoplus_{e \in B_i} S \cdot e$, and  describe
the differential in the complex $\cpx{F}$:
\[
\cpx{F}: \qquad 0 \longrightarrow F_r \overset{d_r}{\longrightarrow}
\cdots \overset{d_2}{\longrightarrow}
F_1 \overset{d_1}{\longrightarrow}
F_0 \overset{\epsilon}{\longrightarrow} R_G \longrightarrow 0
\]
by
\begin{align*}
d \resbasis{i}{j} &= x_ix_j \\
d \resbasis{\sigma}{\tau} &=
  \sum_{i \in \sigma} (-1)^{\alpha_1(\sigma,\tau,i)}
  x_i \resbasis{\sigma \smallsetminus i}{\tau} \\
& \phantom{=} +
  \sum_{i \in \tau} (-1)^{\alpha_2(\sigma,\tau,i)}
  x_i \resbasis{\sigma}{\tau \smallsetminus i}
\end{align*}
where
\[
\alpha_1(\sigma,\tau,i) = \card{\tau} + \card{\{j \in \sigma \mid j > i\}} \quad
\alpha_2(\sigma,\tau,i) = \card{\{j \in \tau \mid j > i\}},
\]
and where we interpret non-existent basis elements occuring in the formula as zero.
By setting
$\multideg \resbasis{\sigma}{\tau} =
\multideg \left(\prod_{i \in (\sigma \cup \tau)} x_i\right)$
we get a complex of $\nat^n$-graded modules, since it is clear
that the differential respects this grading.

\begin{Thm}[Chen, Dochtermann--Engstr\"om]
  Given a cointerval graph $G$, the complex $\cpx{F}$ defined above is
  the minimal free $\nat^n$-graded resolution of $R_G$.
\end{Thm}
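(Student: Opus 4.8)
The plan is to verify three properties of $\cpx{F}$: that it is a complex ($d^2=0$), that it is minimal, and that it is a resolution of $R_G$. Minimality is immediate, since every entry of every differential matrix is a variable $x_i$ and hence lies in the maximal ideal $\mathfrak{m}=(x_1,\dots,x_n)$; thus once exactness is known, $\cpx{F}$ is automatically the \emph{minimal} resolution. That $\epsilon\circ d_1=0$ is equally direct: $\epsilon(d_1\resbasis{i}{j})=\overline{x_ix_j}=0$ in $R_G$ because $ij\in E(G)$ forces $x_ix_j\in I_G$, and $\epsilon$ is visibly surjective, with kernel $\im d_1$ once exactness at $F_0$ is established.

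The relation $d^2=0$ is a sign computation of Koszul type. Expanding $d^2\resbasis{\sigma}{\tau}$, the terms come in pairs obtained by deleting two elements of $\sigma\cup\tau$ in the two possible orders; the exponents $\alpha_1,\alpha_2$ are designed so that each such pair cancels. The only point requiring the hypotheses on $G$ is that every intermediate symbol that appears is a genuine basis element: deleting an element of $\sigma$ or a non-minimal element of $\tau$ obviously preserves conditions (1)--(3), while deleting $\min\tau$ raises the minimum of $\tau$ to some $m'$, and one must check that $\sigma\subseteq\pnbhd(m')$ still holds. This is exactly the content of Lemma~\ref{lem:pnbhd}: since $\sigma\subseteq\pnbhd(\min\tau)$ and $\min\tau<m'$, the nesting $\pnbhd(\min\tau)\subseteq\pnbhd(m')$ gives $\sigma\subseteq\pnbhd(m')$. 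Hence no legitimate cancellation is spoiled by a term being declared zero, and $d^2=0$ follows.

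For exactness I would pass to the $\nat^n$-grading and argue one multidegree at a time. Since each $\resbasis{\sigma}{\tau}$ has the squarefree multidegree $\sum_{i\in\sigma\cup\tau}\mathbf e_i$ and the differential multiplies by single variables, the degree-$\mathbf b$ strand of $\cpx{F}$ is, after cancelling the common monomial coefficients, a complex $\chaincpx{W}$ of $k$-vector spaces with $\pm1$ entries depending only on $W=\supp\mathbf b$: its basis in homological degree $i$ is $\{\resbasis{\sigma}{\tau}\in B_i:\sigma\cup\tau\subseteq W\}$, together with the one-dimensional $F_0=k$ in degree $0$. One then splits into two cases. If $W$ is an independent set of $G$, condition (3) cannot be met by any nonempty $\sigma$, so $\chaincpx{W}$ vanishes in positive homological degree and $H_0=k=(R_G)_{\mathbf b}$ (here $x^{\mathbf b}\notin I_G$). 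If $W$ contains an edge, then $x^{\mathbf b}\in I_G$, so $(R_G)_{\mathbf b}=0$, and one must show that $\chaincpx{W}$ is \emph{exact}, including at $F_0$.

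This last acyclicity is the heart of the matter and the main obstacle. I would establish it by organising $\cpx{F}$ as an iterated mapping cone in the spirit of Chen: processing the vertices in increasing order, the generators contributed at vertex $j$ are $\{x_ix_j:i\in\pnbhd(j)\}$, and Lemma~\ref{lem:pnbhd} guarantees that the relevant colon ideals are again cointerval edge ideals of the same shape, so the induction closes and each comparison map has entries in $\mathfrak{m}$; the symbol $\resbasis{\sigma}{\tau}$ is precisely the bookkeeping device for this iteration, with $\tau$ recording the cone directions and $\sigma\subseteq\pnbhd(\min\tau)$ the chosen generator. Equivalently, and perhaps more cleanly, one can fix $W$ containing an edge and exhibit an acyclic matching on the basis of $\chaincpx{W}$ with no critical cells, for instance by toggling a distinguished vertex of $W$ in and out of $\sigma$; the nesting of pre-neighbourhoods is again what keeps the matching rule well defined on the constrained basis and prevents the appearance of unmatched boundary cells. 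Either route reduces the global statement to the elementary combinatorics of the nested sets $\pnbhd(m)\cap W$, and I would expect the bulk of the work to be confirming that the chosen matching is acyclic and critical-cell-free, which is what forces $\chaincpx{W}$ to be exact whenever $W$ is not independent. Since the theorem itself is due to Chen and to Dochtermann--Engstr\"om, one may alternatively simply verify that $\cpx{F}$ is the announced specialisation of either of their resolutions, after which exactness and minimality are inherited.
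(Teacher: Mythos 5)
Your proposal takes a genuinely different route from the paper. The paper's proof is a pure identification argument: it observes that $\cpx{F}$ is the chain complex of the polyhedral complex of Dochtermann and Engstr\"om in the $d=2$ case, or alternatively that it coincides with Chen's iterated-mapping-cone resolution, with Lemma~\ref{lem:pnbhd} supplying exactly the nesting of pre-neighbourhoods needed to match the two descriptions. Your closing sentence is therefore, in effect, the paper's entire proof. Your main line of argument --- direct verification of $d^2=0$, minimality, and strand-by-strand exactness --- is a legitimate alternative, and your observations are accurate where you carry them out: minimality is immediate from the entries lying in $\mathfrak{m}$, and your identification of Lemma~\ref{lem:pnbhd} as the reason no cancellation in $d^2$ is spoiled by a symbol being declared zero (namely when deleting $\min\tau$) is exactly the right point. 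It is also worth noting that the paper, although it does not use this to prove the theorem, effectively validates your second suggested strategy in Section~\ref{sec:homotopy}: the Morse matching $M$ constructed there has critical cells precisely the monomials $x^{\alpha}\notin I_G$, concentrated in homological degree $0$, which is exactly the ``acyclic matching with no critical cells in positive degree'' that your strand argument requires.

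The one caveat is that the crucial step of your direct route --- exactness of the strand $\chaincpx{W}$ when $W$ contains an edge --- is only sketched. ``Toggling a distinguished vertex of $W$ in and out of $\sigma$'' is not yet a well-defined matching: the toggled vertex must be chosen compatibly with conditions (2) and (3) on $\resbasis{\sigma}{\tau}$, and the actual matching the paper uses (its $M_1$, $M_2$, $M_3$) is noticeably more delicate than a single toggle, with acyclicity certified by the partial order $\prec$. Likewise the iterated-mapping-cone alternative requires verifying that the colon ideals $(I_{G_{j-1}}:x_j)$ have the claimed form, which you assert but do not check. As a self-contained proof your text therefore has a gap at its central step; as a proof by specialisation of the cited constructions (your final sentence) it is complete and agrees with the paper.
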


\begin{proof}
  It is easy to see that the complex $\cpx{F}$ is the chain complex of
  the polyhedral complex that Dochtermann and Engstr\"om describe in
  \cite{dochtermann-engstrom:cointerval_ideals}, for the special case
  of an edge ideal of a cointerval (non-hyper)-graph.

  Alternatively, the definition of $\cpx{F}$ can be seen to agree with
  Chen's resolution, \cite[Construction 3.4]{chen:edge_ideals} by
  virtue of the conclusion of Lemma~\ref{lem:pnbhd} and the last
  remark in Chen's construction.
\end{proof}

\section{A contracting homotopy} \label{sec:homotopy}
In this section we will use methods of algebraic Morse theory to
define a contracting homotopy on the resolution. The notation we will
use is the same as in \cite{skoldberg:in_lin_syz}, whither the reader
is referred for reference.

In order to construct the contracting homotopy on $\cpx{F}$, we
consider $\cpx{F}$ to be a based complex of $k$-vector spaces with
basis elements
$x^{\alpha} \resbasis{\sigma}{\tau}$, and we will construct a Morse
matching $M$ on the directed graph $\digraph{\cpx{F}}$. To help us
show that the matching we are about to define is a Morse matching, we
partially order the elements of $B_d$ by letting
$\resbasis{\sigma_1}{\tau_1} \prec \resbasis{\sigma_2}{\tau_2}$ if
(1) $\max \tau_1 > \max \tau_2$, or (2) $\max \tau_1 = \max \tau_2$, and
$\min \sigma_1 < \min \sigma_2$.

We define three sets of edges of $\digraph{\cpx{F}}$:
$M_1$, $M_2$ and $M_3$, the union of
which will be our partial matching.

First, we let $M_1$ consist of the edges
\[
  x^{\alpha} \resbasis{\sigma}{\tau\cup j}
  \rightarrow
  x^{\alpha} x_j \resbasis{\sigma}{\tau},
  \qquad j \geq \max (\supp \alpha), j > \max(\tau).
\]

There are now two types of unmatched vertices; first we have the
vertices $x^{\alpha}$, and then the vertices
$x^{\alpha}\resbasis{\sigma}{\tau}$  where $\card{\tau} = 1$ and
$\max(\supp\alpha) \leq \max \tau$.

Next, we let $M_2$ be the edges
\[
  x^{\alpha} \resbasis{i\cup \sigma}{j}
  \rightarrow
  x^{\alpha} x_i \resbasis{\sigma}{j}
\]
satisfying
\[
i \in \nbhd(j), \quad i < \min \sigma, \quad i \leq \min (\supp \alpha \cap \nbhd(j))
\]
in the induced subgraph on the vertices $M_1^0$.
The vertices in $(M_1 \cup M_2)^0$ are then all $x^{\alpha}$
and the $x^{\alpha}\resbasis{i}{j}$ for which
$j \geq \max \supp \alpha$, and $i \leq \min (\supp \alpha \cap \nbhd(j))$,
so we let $M_3$ be the set of edges
\[
  x^{\alpha}\resbasis{i}{j} \rightarrow x^{\alpha}x_ix_j,
  \qquad \resbasis{i}{j} \text{ $\prec$-minimal such that } x_ix_j |
  x^{\alpha}x_ix_j.
\]

And, finally, we set $M = M_1 \cup M_2 \cup M_3$, and we get the
unmatched vertices $M^{0} = \{ x^{\alpha} \mid x^{\alpha} \not\in I_G \}$.

\begin{Lem}
  The set $M$ is a Morse matching on $\digraph{\cpx{F}}$.
\end{Lem}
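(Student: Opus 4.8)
The plan is to verify the two defining conditions of a Morse matching: that $M$ is a partial matching on $\digraph{\cpx{F}}$ (each vertex is incident to at most one edge of $M$, and the edges respect the differential with invertible coefficients), and that $M$ is acyclic (the associated directed graph $\morsegraph{\cpx{F}}{M}$, obtained by reversing the matched edges, has no directed cycles). The matching property should be handled piece by piece. First I would check that each of $M_1$, $M_2$, $M_3$ is itself a matching: for $M_1$, given a target $x^\alpha x_j\resbasis{\sigma}{\tau}$, the source is forced since $j$ must be the unique index removed, and the constraints $j\geq\max(\supp\alpha)$, $j>\max\tau$ pin down at most one edge; similarly for $M_2$ the constraint $i\leq\min(\supp\alpha\cap\nbhd(j))$ selects a unique $i$, and for $M_3$ the $\prec$-minimality clause does the same. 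Crucially, the three sets act on disjoint portions of the graph: $M_2$ is defined only on $M_1^0$ (the unmatched vertices after $M_1$), and $M_3$ only on $(M_1\cup M_2)^0$, so by construction no vertex can be matched twice. I would also confirm that each listed edge genuinely occurs in the differential with an invertible (i.e.\ unit) coefficient in $k$, which holds because every coefficient appearing in $d$ is $\pm1$ times a monomial, and the matched edges all correspond to multiplication by a single variable.

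The remaining and main obstacle is acyclicity. The standard technique, and the one I would use, is to produce a function on the basis elements that strictly decreases (or increases) along every edge of $\morsegraph{\cpx{F}}{M}$, thereby precluding cycles. This is exactly the purpose of the partial order $\prec$ on $B_d$ introduced just before the matching: I would show that traversing a matched edge backwards (from target to source) together with any subsequent unmatched edge of the differential produces a net strict decrease in $\prec$, or more precisely that any directed path in $\morsegraph{\cpx{F}}{M}$ cannot return to its starting vertex. The key structural input is the compatibility encoded in the definitions: $M_1$ moves elements out of $\tau$ (affecting $\max\tau$), $M_2$ moves elements out of $\sigma$ (affecting $\min\sigma$ once $\max\tau$ is fixed), and these are precisely the two clauses of $\prec$. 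I would argue that a zigzag path alternates between a reversed matched edge and a forward differential edge, and analyse how $(\max\tau,\min\sigma)$ evolves, showing it is monotone in the lexicographic sense defining $\prec$.

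Concretely, since any potential cycle must involve basis elements of a fixed multidegree $\alpha$ (the differential and the matching both preserve $\multideg$), I would fix a multidegree and examine the finite directed graph on that graded piece. For a reversed $M_1$-edge, $\max\tau$ strictly increases (we reinsert $j>\max\tau$), so the value of $\max\tau$ can only grow along reversed edges and can only stay the same or drop along forward edges; a careful bookkeeping shows that the only way to avoid an immediate decrease in $\prec$ after a reversal is constrained by the condition $j\geq\max(\supp\alpha)$, which limits how the fixed total multidegree can be redistributed. The analogous analysis for $M_2$ uses $\min\sigma$ as the tiebreaker, and for $M_3$ the targets $x^\alpha x_ix_j$ lie in homological degree $0$ and have no outgoing differential edges into higher $B_d$, so they cannot participate in a cycle through the higher part of the complex. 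Assembling these monotonicity statements, I would conclude that no directed cycle exists in $\morsegraph{\cpx{F}}{M}$, and hence that $M$ is a Morse matching. The hard part will be the case analysis establishing the monotonicity of $\prec$ along zigzag paths that mix the three edge types, since one must rule out cycles that alternate between the $M_1$ and $M_2$ regions; the disjointness of where $M_2$ and $M_3$ are defined (only on previously unmatched vertices) is what ultimately tames this interaction.
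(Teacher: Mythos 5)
Your proposal takes essentially the same route as the paper: the partial-matching property is checked directly from the construction (the three pieces $M_1$, $M_2$, $M_3$ being defined on successively unmatched vertices), and acyclicity is obtained by showing that every elementary reduction path strictly decreases the partial order $\prec$, which is exactly the argument the paper gives, only in much terser form.
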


\begin{proof}
  It is clear from construction that $M$ is a partial matching,
  so we need to show that there are no infinite paths in
  $\morsegraph{\cpx{F}}{M}$. We can see
  that if we have an elementary reduction path from
  $x^{\alpha}\resbasis{\sigma}{\tau}$ to $x^{\beta}\resbasis{\sigma'}{\tau'}$, then
  $\resbasis{\sigma'}{\tau'} \prec \resbasis{\sigma}{\tau}$
  which shows that the length of the directed path between vertices in the same
  degree is bounded.
\end{proof}

Since $M$ is a Morse matching with critical vertices $M^{0}$ concentrated
in degree 0, we get a contracting homotopy $\phi$ as in
\cite[Lemma 2]{skoldberg:morse}, which can be described in terms of
reduction paths, see J\"ollenbeck and Welker~\cite{joellenbeck_welker:morse}
and Sk\"oldberg~\cite{skoldberg:in_lin_syz}. We will next define a
$k$-linear map $c$; and then show that $c$ coincides with the contracting
homotopy $\phi$.

We will need to distinguish between three types of basis elements in
order to describe $c$:
\begin{enumerate}
\item $x^{\alpha}$. 
\item $x^{\alpha}\resbasis{\sigma}{\tau}$ where $\card{\tau} = 1$.
\item $x^{\alpha}\resbasis{\sigma}{\tau}$ where $\card{\tau} \geq 2$.
\end{enumerate}

To the basis element $x^{\alpha}\resbasis{\sigma}{\tau}$,  we associate sets
$C_1$, $C_2$ and $C_3$ by
\begin{align*}
  C_1 &= \{i \mid i \in \supp \alpha, i > \max \tau\} \\
  C_2 &= \{i \mid i \in \supp \alpha, i < \min \sigma, \{i,\min \tau\} \in E(G)\} \\
  C_3 &= \{i \mid i \in \supp \alpha, i < \min \sigma, i < \max \supp \alpha,
  \{i, \max\supp\alpha\}\in E(G)\}
\end{align*}
and in case the corresponding set is non-empty, we let
$m_1 = \max C_1$, $m_2 = \min C_2$ and $m_3 = \min C_3$.

For the basis elements $x^{\alpha}$ we now let
\[
c(x^{\alpha}) =
\begin{dcases}
  \frac{x^{\alpha}}{x_ix_j}\resbasis{i}{j}, & \quad \text{if }
  x^{\alpha} \in I_G, \resbasis{i}{j} \text{ $\prec$-minimal such that }
  x_ix_j | x^{\alpha}, \\
  0, &\quad \text{otherwise.}
\end{dcases}
\]

Turning to the basis elements $x^{\alpha}\resbasis{\sigma}{\tau}$
where $\card{\tau} = 1$, $\tau = \{ i \}$ next, we set

\[
c(x^{\alpha} \resbasis{\sigma}{\tau}) =
\begin{cases}
 \!\begin{multlined}[b][.5\textwidth]
   \frac{x^{\alpha}}{x_{m_1}}\resbasis{\sigma}{\tau\cup m_1} \\
   +(-1)^{\card{\sigma}+1}
 \frac{x^{\alpha}x_i}{x_{m_1}x_{m_3}} \resbasis{m_3\cup \sigma}{m_1},
 \end{multlined}
 &\quad \text{if } C_1\neq \emptyset, C_3 \neq \emptyset, \\
 \!\begin{multlined}[b][.5\textwidth]
 \frac{x^{\alpha}}{x_{m_1}}\resbasis{\sigma}{\tau\cup m_1},
 \end{multlined}
 &\quad \text{if } C_1 \neq \emptyset, C_3 = \emptyset, \\
 \!\begin{multlined}[b][.5\textwidth]
 (-1)^{\card{\sigma}+1}
 \frac{x^{\alpha}}{x_{m_2}}\resbasis{m_2 \cup \sigma}{\tau},
 \end{multlined}
 &\quad \text{if } C_1 = \emptyset, C_2 \neq \emptyset, \\
 0, &\quad \text{if } C_1 = \emptyset, C_2 = \emptyset.
\end{cases}
\]

Lastly, we treat the basis elements $x^{\alpha}\resbasis{\sigma}{\tau}$
 where $\card{\tau} \geq 2$  and let
\[
c(x^{\alpha}\resbasis{\sigma}{\tau}) =
\begin{cases}
  \frac{x^{\alpha}}{x_{m_1}}\resbasis{\sigma}{\tau\cup m_1},
  &\quad \text{if } C_1 \neq \emptyset, \\
  0, &\quad \text{otherwise.}
    \end{cases}
\]

\begin{Lem}
  The map $c$ is an $\nat^n$-graded contracting homotopy of $\cpx{F}$ such that
  $c^2 = 0$ and $c(e)= 0$ for all $e \in \bigcup_{i}B_i$.
\end{Lem}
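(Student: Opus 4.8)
The plan is to prove the lemma by showing that $c$ agrees, basis element by basis element, with the contracting homotopy $\phi$ that the Morse matching $M$ supplies through \cite[Lemma 2]{skoldberg:morse}. Since $M$ matches every vertex outside the critical set $M^0$, and $M^0$ sits in homological degree $0$, that lemma already produces a contracting homotopy $\phi$ of $\cpx{F}$; moreover every edge of $M_1$, $M_2$, $M_3$ joins two vertices of the same $\nat^n$-degree, so reduction paths stay within a fixed multidegree and $\phi$ is $\nat^n$-graded. Thus once the identity $c = \phi$ is in hand, the first assertion is immediate. I will use the reduction-path description of $\phi$ from \cite{joellenbeck_welker:morse} and \cite{skoldberg:in_lin_syz}: one has $\phi(b) = \sum_p \Gamma(p)\, e(p)$, the sum ranging over the reduction paths $p$ in $\morsegraph{\cpx{F}}{M}$ whose first arrow is a reversed matching edge (an up-step) out of $b$, where $e(p)$ is the endpoint of $p$ and $\Gamma(p)$ the product of its edge coefficients. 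Because $M$ is a Morse matching, the order $\prec$ strictly decreases along elementary reduction paths, so each such sum is finite and $\phi$ is well defined.

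First I would dispose of the two side conditions. Every $e \in \bigcup_i B_i$ is either the unit $1$ or an element $\resbasis{\sigma}{\tau}$ with trivial monomial coefficient, and inspecting the targets of $M_1$, $M_2$, $M_3$ shows that none of these is the target (head) of a matching edge; hence no reduction path starts with an up-step out of $e$ and $\phi(e) = 0$. The same conclusion reads off the formula directly, since $\supp\alpha = \emptyset$ forces $C_1 = C_2 = C_3 = \emptyset$ and $1 \notin I_G$, giving $c(e) = 0$. The relation $c^2 = 0$ is the standard annihilation condition $\phi^2 = 0$ for the Morse homotopy; alternatively it follows from the formula by checking that every monomial produced by $c$ lands in a branch in which the governing set is empty (for a type-(1) output the chosen $\resbasis{i}{j}$ is $\prec$-minimal, so the corresponding $C_1$ is empty, and so on), whence a second application of $c$ returns $0$.

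The substance is the identity $c = \phi$ on the three families, which I would verify by enumerating reduction paths. For a monomial $x^\alpha \in I_G$ the only available up-step is the reversed $M_3$-edge into the $\prec$-minimal $\resbasis{i}{j}$ with $x_ix_j \mid x^\alpha$, and that cell admits no continuing down-then-up detour, so $\phi(x^\alpha) = \frac{x^\alpha}{x_ix_j}\resbasis{i}{j}$, matching $c$. For $\resbasis{\sigma}{\tau}$ with $\card{\tau}=1$ the initial up-step is a reversed $M_1$-edge when $C_1 \neq \emptyset$ and a reversed $M_2$-edge when $C_1 = \emptyset$, $C_2 \neq \emptyset$; after the $M_1$ up-step into $\resbasis{\sigma}{\tau\cup m_1}$ one applies the differential and re-matches the surviving terms against $M_2$, and tracking the signs $\alpha_1$, $\alpha_2$ yields exactly the second, $C_3$-indexed term with sign $(-1)^{\card{\sigma}+1}$. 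For $\card{\tau}\geq 2$ only the reversed $M_1$ up-step contributes, and I would show that every term created by the subsequent differential is immediately re-matched by $M_1$ and cancels in pairs, leaving only $\frac{x^\alpha}{x_{m_1}}\resbasis{\sigma}{\tau\cup m_1}$.

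The main obstacle is the middle case, $\card{\tau}=1$ with both $C_1$ and $C_3$ nonempty, where a genuine length-three zig-zag (up, down, up) occurs: here one must confirm that exactly two reduction paths survive, producing the stated two-term closed form, and that the accumulated signs collapse to $(-1)^{\card{\sigma}+1}$. Controlling precisely which differential terms re-enter the matching, and verifying through Lemma~\ref{lem:pnbhd} and the neighbourhood conditions defining $M_2$ that no further detours are admissible, is the delicate point; the $\prec$-decrease established for $M$ is what guarantees termination and confines the path sum to these finitely many contributions.
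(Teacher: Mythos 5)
Your proposal follows essentially the same route as the paper: identify $c$ with the Morse-theoretic homotopy $\phi$ by enumerating, for each of the three types of basis element, the reduction paths beginning with the reversed matching edge, with the only nontrivial path sum occurring in the $\card{\tau}=1$, $C_1\neq\emptyset$, $C_3\neq\emptyset$ case, and then deduce $c^2=0$ and $c(e)=0$ from the fact that the output of $\phi$ lands in $M^{+}$. The paper's proof is exactly this argument (it likewise defers the sign check), so your plan is correct and matches it.
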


\begin{proof}
  Let $\phi$ be the homotopy we get from the Morse matching $M$; we
  shall see that $c = \phi$.

  First we look at the basis element $v = x^{\alpha}$. We have two cases,
  if $x^{\alpha} \in I_G$, then $x^{\alpha}$ is matched with
  $v' = x^{\alpha}/x_{i}x_{j} \cdot \resbasis{i}{j}$ where $\resbasis{i}{j}$
  is minimal with respect to $\prec$. There are no elementary reduction
  paths originating in $v'$, so we can conclude that in this case
  $c(v) = v' = \phi(v)$. In the case $x^{\alpha} \not\in I_G$, we have
  that $x^{\alpha} \in M^{0}$, so $c(v) = 0 = \phi(v)$.

  Next, we turn to elements $v = x^{\alpha}\resbasis{\sigma}{j}$. If
  $C_1 \neq \emptyset$, $v \in M^{-}$ and is matched with
  $v' = x^{\alpha}/x_{m_1} \cdot \resbasis{\sigma}{j m_1}$. There is an elementary
  reduction path from $v'$ to
  $v'' = x^{\alpha}x_{j}\resbasis{m_3 \cup \sigma}{m_{1}}$ precisely when
  $C_3 \neq \emptyset$. It is easy to see that there are no elementary
  reduction paths starting in $v''$, so after verifying the signs,
  we can see that $c(v) = \phi(v)$ when $C_1 \neq \emptyset$.
  If $C_1 = \emptyset$, we have that $x^{\alpha}\resbasis{\sigma}{j}\in M^{-}$
  precisely when $C_2 \neq \emptyset$, in which case $v$ is matched with
  $v' = x^{\alpha}/x_{m_2}\cdot\resbasis{m_2\cup \sigma}{j}$ and there are no
  elementary reduction paths from $v'$, so $c(v) = \phi(v)$ in this case
  as well.

  Lastly, we look at the elements $v = x^{\alpha}\resbasis{\sigma}{\tau}$
  where $\abs{\tau} \geq 2$. Here we can see that $v \in M^{-}$ precisely
  when $C_1 \neq \emptyset$, in which case $v$ is matched with
  $v' =  x^{\alpha}/x_{m_1}\cdot\resbasis{\sigma}{\tau \cup m_1}$ There are no
  elementary reduction paths from $v'$ which shows that $c(v) = \phi(v)$
  for these elements too.

  It is clear from the definition that $c$ respects the multidegree, and
  since $c(v) = 0$ for all elements in $M^{+}$, we can see that $c^2 = 0$
  and $c(e) = 0$ for all $S$-basis elements in $B_m$.

\end{proof}

\section{The multiplicative structure} \label{sec:multiplicative}

Now we are in a position that allows us to define the
multiplication making $\cpx{F}$ into a DGA. Just like in
\cite{skoldberg:in_lin_syz}, we are going to use the following result
in the construction.

\begin{Lem}\label{lem:maclane}
  Suppose that $\cpx{X}$ and $\cpx{Y}$ are complexes of $S$-modules,
  where $X_n = S\otimes_k V_n$ and $Y_n = S \otimes_k W_n$ for
  $k$-spaces $V_n$ and $W_n$, $n \geq 0$. Furthermore, suppose that
  $\cpx{Y}$ is acyclic, with a contracting homotopy $c$ satisfying
  $c^2 =0$. Then, every $S$-linear map $\phi_0:X_0 \longrightarrow Y_0$
  has a unique lifting to a chain map $\phi: \cpx{X} \longrightarrow \cpx{Y}$
  satisfying $\phi(V_n) \subseteq \im c$. This map is defined
  inductively by
  \[
  \phi_{n+1}(\bar{x}) = c\phi_{n}d(\bar{x}), \qquad \bar{x} \in V_{n+1}.
  \]
\end{Lem}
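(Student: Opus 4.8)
The plan is to prove existence and uniqueness separately, in both cases proceeding by induction on homological degree and exploiting the two defining properties of the contracting homotopy: the relation $dc+cd=\id$ and the identity $c^2=0$. Throughout I would use that an $S$-linear map out of $X_{n}=S\otimes_k V_n$ is determined by its restriction to the $k$-space of generators $V_n$, so that it suffices to define $\phi$ on the $V_n$ and to check the chain-map identity on generators.

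For existence, I would take the inductive formula $\phi_{n+1}(\bar x)=c\phi_n d(\bar x)$ for $\bar x\in V_{n+1}$ as the \emph{definition} of $\phi_{n+1}$, extended $S$-linearly, with $\phi_0$ as given. The condition $\phi(V_n)\subseteq\im c$ is then immediate for $n\geq 1$, since every generator is placed into the image of $c$ by construction. The substantive point is to verify that $\phi$ is a chain map, i.e.\ $d\phi_{n+1}=\phi_n d$ on $X_{n+1}$, which I would establish by induction. For $\bar x\in V_{n+1}$ one computes $d\phi_{n+1}(\bar x)=dc\phi_n d(\bar x)=(\id-cd)\phi_n d(\bar x)$ using $dc+cd=\id$, and the correction term vanishes because $cd\phi_n d(\bar x)=c\phi_{n-1}d^2(\bar x)=0$ by the inductive hypothesis $d\phi_n=\phi_{n-1}d$ together with $d^2=0$. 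This leaves $d\phi_{n+1}(\bar x)=\phi_n d(\bar x)$, as required.

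For uniqueness, suppose $\psi$ is any chain map lifting $\phi_0$ with $\psi(V_n)\subseteq\im c$. Assuming inductively that $\psi_n=\phi_n$ (they agree on $V_n$, hence on all of $X_n$ by $S$-linearity), I would argue for $\bar x\in V_{n+1}$ as follows. Because $\psi_{n+1}(\bar x)\in\im c$ and $c^2=0$, we have $c\psi_{n+1}(\bar x)=0$, so applying $\id=dc+cd$ in the positive degree $n+1$ gives $\psi_{n+1}(\bar x)=cd\psi_{n+1}(\bar x)$. Since $\psi$ is a chain map this equals $c\psi_n d(\bar x)=c\phi_n d(\bar x)=\phi_{n+1}(\bar x)$, closing the induction. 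Thus $\psi=\phi$, which simultaneously confirms that the constructed $\phi$ is the unique such lift.

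I expect the main obstacle to be the base case and the precise bookkeeping of the homotopy relation at the bottom of the complex: the identity $dc+cd=\id$ must be applied correctly in degree $0$, where the relevant cancellation uses that $\phi_0$ carries boundaries to boundaries (equivalently, that the degree-$0$ projection complementary to $dc+cd$ annihilates $\phi_0 d(V_1)$), which is exactly what makes $\phi_1$ satisfy $d\phi_1=\phi_0 d$. Once this base case is in place, the inductive steps above are purely formal manipulations of the two relations $dc+cd=\id$ and $c^2=0$.
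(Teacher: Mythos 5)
Your proof is correct, but note that the paper does not actually prove this lemma: its entire proof is a citation of MacLane, Theorem IX.6.2, of which the statement is a special case. What you have written out is, in substance, the standard argument behind that citation: existence by taking $\phi_{n+1}=c\phi_n d$ on generators as the definition and using $dc+cd=\id$ together with $d^2=0$ to kill the correction term $cd\phi_n d(\bar x)=c\phi_{n-1}d^2(\bar x)$, and uniqueness by observing that $\psi_{n+1}(\bar x)\in\im c$ and $c^2=0$ force $c\psi_{n+1}(\bar x)=0$, hence $\psi_{n+1}(\bar x)=cd\psi_{n+1}(\bar x)=c\psi_n d(\bar x)=\phi_{n+1}(\bar x)$. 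The one genuinely delicate point is the degree-$0$ base case, and you diagnose it accurately: as literally stated (``every $S$-linear map $\phi_0$'') the existence of the lift needs either that $\cpx{Y}$ is exact at $Y_0$ as well, so that $dc=\id$ there, or else the implicit hypothesis that $\phi_0$ carries $d(X_1)$ into $d(Y_1)$ (equivalently, that $\phi_0$ is a map over the augmentations). In the paper's application the homotopy $c$ on $F_0$ only splits $d$ over $I_G$, but the needed compatibility holds because $\mu_0$ sends $d(F_1\otimes_S F_0\oplus F_0\otimes_S F_1)$ into $I_G=dF_1$. So your self-contained argument is a valid replacement for the citation, and is arguably more informative for the reader, since it makes explicit exactly where $c^2=0$ and the side condition $\phi(V_n)\subseteq\im c$ enter.
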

\begin{proof}
This is a special case of~\cite[Theorem IX.6.2]{maclane:homology}.
\end{proof}

We now let $\mu$ be the map
$\mu: \cpx{F} \otimes_{S} \cpx{F} \rightarrow \cpx{F}$ that is the
lifting of the canonical isomorphism
$\mu_0:F_0\otimes_S F_0 = S\otimes_S S \rightarrow S = F_0$
using the contracting homotopy $c$ from the previous section.
This will be our proposed product on $\cpx{F}$
so we will henceforth write $x \star y$ for $\mu(x \otimes y)$.

\begin{Lem} \label{lem:dga_basics}
  For all basis elements $x,y$ of $\cpx{F}$ we have
  \begin{enumerate}
  \item $d(x\star y) = d(x) \star y + (-1)^{\abs{x}} x \star d(y)$.
  \item $x \star y = (-1)^{\abs{x}\abs{y}} y \star x$.
  \item $1 \star x = x \star 1 = x$.
  \end{enumerate}
\end{Lem}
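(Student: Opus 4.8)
The plan is to treat the three identities in increasing order of difficulty, exploiting throughout that $\mu$ is, by Lemma~\ref{lem:maclane}, a chain map $\cpx{F}\otimes_S\cpx{F}\to\cpx{F}$ realised as the unique lift of $\mu_0$ whose graded pieces land in $\im c$. Throughout I use that $\mu$ is $S$-linear and that the contracting homotopy $c$ of the previous section satisfies both $c(e)=0$ for every $S$-basis element $e\in\bigcup_i B_i$ and the defining relation $dc+cd=\id$ in positive homological degrees.

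Identity (1) is immediate. The differential on the tensor complex is $d(x\otimes y)=d(x)\otimes y+(-1)^{\abs{x}}x\otimes d(y)$, so applying the chain map $\mu$ and rewriting $\mu(x\otimes y)$ as $x\star y$ produces exactly the stated Leibniz rule; nothing beyond the chain-map property is required.

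For identity (3) I would prove $x\star 1=x$ by induction on $\abs{x}$ over basis elements. The base case is $\mu(1\otimes 1)=\mu_0(1\otimes 1)=1$. For $\abs{x}=n\ge 1$, since $d(1)=0$ in $\cpx{F}$ the inductive formula gives $\mu(x\otimes 1)=c\,\mu(d(x)\otimes 1)$; writing $d(x)=\sum_k m_k e_k$ with each $e_k$ a basis element of degree $n-1$ and $m_k\in S$, $S$-linearity together with the inductive hypothesis yields $\mu(d(x)\otimes 1)=\sum_k m_k e_k=d(x)$. It then remains to see $c\,d(x)=x$, which follows because $c(x)=0$ forces $dc(x)=0$, so $cd(x)=(dc+cd)(x)=x$. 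The identity $1\star x=x$ is proved the same way, now starting from $\mu(1\otimes x)=c\,\mu(1\otimes d(x))$, the sign $(-1)^{\abs{1}}$ being $+1$.

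The graded commutativity (2) is, I expect, the crux, and I would obtain it from the uniqueness clause of Lemma~\ref{lem:maclane} rather than by direct computation. Introduce the swap map $T\colon\cpx{F}\otimes_S\cpx{F}\to\cpx{F}\otimes_S\cpx{F}$, $T(x\otimes y)=(-1)^{\abs{x}\abs{y}}y\otimes x$, and consider $\mu\circ T$. Both $\mu$ and $\mu T$ are chain maps into $\cpx{F}$, and since $T$ fixes $1\otimes 1$ in degree $0$, both lift $\mu_0$. Because $T$ merely permutes the $S$-basis tensors $e\otimes f$ up to sign, it preserves each $k$-subspace $V_n$ spanned by them; as $\mu(V_n)\subseteq\im c$ by construction, we also get $\mu T(V_n)\subseteq\im c$. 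The uniqueness of the lift then gives $\mu T=\mu$, that is, $x\star y=(-1)^{\abs{x}\abs{y}}y\star x$. The one point demanding genuine care is the sign bookkeeping showing $T$ is a chain map with the convention above, and confirming that it respects the decomposition into the $V_n$ so that the uniqueness hypothesis of Lemma~\ref{lem:maclane} truly applies; once this is verified the conclusion is formal.
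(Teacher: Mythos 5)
Your proposal is correct and follows essentially the same route as the paper: (1) is read off from the chain-map property of $\mu$, (2) is obtained by comparing $\mu$ with $\mu\circ\tau$ via the uniqueness clause of Lemma~\ref{lem:maclane}, and (3) is an induction on the degree of $x$. The extra details you supply for (3) (using $c(e)=0$ on basis elements together with $dc+cd=\id$ to get $cd(x)=x$) correctly fill in what the paper leaves implicit.
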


\begin{proof}
  Claim (1) just expresses that $\mu$ is a chain map.
  For (2), we let $\tau: \cpx{F}\otimes_S\cpx{F} \longrightarrow
  \cpx{F}\otimes_S\cpx{F}$ be defined on basis elements $x, y$ by
  $\tau(x\otimes y) = (-1)^{\abs{x}\abs{y}} y\otimes x$. Now $\mu$ and
  $\mu\circ\tau$ are
  chain maps lifting the same map in degree 0 and both mapping basis
  elements to $\im c$; so by Lemma~\ref{lem:maclane}, they must be
  equal.
  Claim (3) is proven by induction on the degree of $x$.
\end{proof}

Let us now define a map $\partial: F_{n} \rightarrow F_{n-1}$, $n \geq 1$,
by
\begin{align*}
\partial \resbasis{i}{j} &= x_ix_j \\
\partial \resbasis{\sigma}{\tau} &=
  x_{\max(\tau)} \resbasis{\sigma}{\tau \smallsetminus \max(\tau)} \\
& \phantom{=} -
  (-1)^{\abs{\tau} + \abs{\sigma}}
  x_{\min(\sigma)} \resbasis{\sigma \smallsetminus \min(\sigma)}{\tau},
\end{align*}
again treating any non-existent basis elements occurring as zero.

Its usefulness comes from that we can replace the real differential $d$
by $\partial$ when reasoning about the multiplication, as the following
lemma shows.

\begin{Lem}\label{lem:partial_diff}
  For basis elements $\resbasis{\sigma_1}{\tau_1}$,
  $\resbasis{\sigma_2}{\tau_2}$ we have
  \[
  c(d(\resbasis{\sigma_1}{\tau_1}) \star \resbasis{\sigma_2}{\tau_2})
  =
  c(\partial(\resbasis{\sigma_1}{\tau_1}) \star \resbasis{\sigma_2}{\tau_2})
  \]
\end{Lem}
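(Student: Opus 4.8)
The plan is to compare $d$ and $\partial$ term by term and show that the difference, after composing with $c$ and the product $\star$, vanishes. The key observation is that $\partial\resbasis{\sigma}{\tau}$ keeps only two of the terms in the full differential $d\resbasis{\sigma}{\tau}$: the term $x_{\max\tau}\resbasis{\sigma}{\tau\smallsetminus\max\tau}$ coming from removing the largest element of $\tau$, and the term $x_{\min\sigma}\resbasis{\sigma\smallsetminus\min\sigma}{\tau}$ coming from removing the smallest element of $\sigma$. So $d\resbasis{\sigma}{\tau}-\partial\resbasis{\sigma}{\tau}$ is a signed sum of terms $\pm x_i\resbasis{\sigma'}{\tau'}$ where either $i\in\tau$ with $i<\max\tau$ (removing a non-maximal element of $\tau$), or $i\in\sigma$ with $i>\min\sigma$ (removing a non-minimal element of $\sigma$). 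First I would verify that the signs $\alpha_1,\alpha_2$ in $d$ reduce, for the two retained terms, exactly to the signs appearing in $\partial$; this is a routine bookkeeping check that $\alpha_2(\sigma,\tau,\max\tau)=0$ and that $\alpha_1(\sigma,\tau,\min\sigma)=\card{\tau}+\card{\sigma}-1$ gives the claimed sign $-(-1)^{\card{\tau}+\card{\sigma}}$.

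Having isolated the difference, it suffices to show that for each omitted term $\pm x_i\resbasis{\sigma'}{\tau'}$ we have $c\bigl(x_i\resbasis{\sigma'}{\tau'}\star\resbasis{\sigma_2}{\tau_2}\bigr)=0$. The strategy is to reduce this to a statement purely about the contracting homotopy $c$: since $x_i$ is a scalar from $S$ and $\star$ is $S$-linear, we can write $x_i\resbasis{\sigma'}{\tau'}\star\resbasis{\sigma_2}{\tau_2}=x_i\cdot\bigl(\resbasis{\sigma'}{\tau'}\star\resbasis{\sigma_2}{\tau_2}\bigr)$, so the monomial multiplier $x_i$ must be tracked through the formulas for $c$. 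The crucial point is that $c$ acts on a basis element $x^\gamma\resbasis{\sigma'}{\tau'}$ through the sets $C_1,C_2,C_3$ determined by the support of $x^\gamma$ relative to $\tau'$ and $\sigma'$; I would argue that for the omitted terms the index $i$ fails the defining conditions of $C_1,C_2,C_3$, or else lands in a configuration where $c$ evaluates to zero. Concretely, when a non-maximal element of $\tau$ is removed, $\max\tau$ itself still lies in $\tau'$ and dominates, so the element $i$ being multiplied in cannot contribute to $C_1$ (it is not above $\max\tau'$); and when a non-minimal element of $\sigma$ is removed, $\min\sigma$ still belongs to $\sigma'$, blocking $i$ from lying below $\min\sigma'$ as required by $C_2$ and $C_3$.

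The main obstacle is that $\resbasis{\sigma'}{\tau'}\star\resbasis{\sigma_2}{\tau_2}$ is not itself a single basis element but an $S$-linear combination of basis elements produced by the recursively-defined map $\mu$, so I cannot simply read off the values of $C_1,C_2,C_3$. I expect to need an inductive description, or at least structural control, of which basis elements $x^\gamma\resbasis{\rho}{\upsilon}$ can appear in $\resbasis{\sigma'}{\tau'}\star\resbasis{\sigma_2}{\tau_2}$, together with bounds on the exponent $\gamma$ and on $\max\upsilon$ and $\min\rho$ in terms of the input data. The heart of the argument is to show that the extra factor $x_i$ never pushes such a term into the support of $c$: either $i$ is already present in $\gamma$ or is dominated by elements forced to be there, so that after multiplying by $x_i$ the sets $C_1,C_2,C_3$ are unchanged or remain empty. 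Establishing this invariant, and checking it survives each case of the piecewise definition of $c$, is where the real work lies; the sign verification and the term-by-term comparison of $d$ with $\partial$ are comparatively mechanical.
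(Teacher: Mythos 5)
Your overall skeleton matches the paper's: you isolate $d-\partial$ as the terms removing a non-maximal element of $\tau_1$ or a non-minimal element of $\sigma_1$, reduce the lemma to showing $c\bigl(x_i\,(\resbasis{\sigma'}{\tau'}\star\resbasis{\sigma_2}{\tau_2})\bigr)=0$ for each omitted index $i$, and correctly observe that each summand of the product is a single $x_k\resbasis{\sigma_3}{\tau_3}$ already killed by $c$, so the only danger is that the extra variable $x_i$ activates $C_1$ or $C_2$. Your treatment of $C_1$ is sound: $\max\tau_3=\max(\tau'\cup\tau_2)\geq\max\tau_1>i$ in the $\tau$-case, and $i<\min\tau_1\leq\max\tau_3$ in the $\sigma$-case, so $i\notin C_1$ (and $C_3$ is then irrelevant).

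The genuine gap is in the $C_2$ case ($\card{\tau_3}=1$, $\tau_3=\{t\}$), which you yourself flag as ``where the real work lies'' without supplying the idea that closes it. Your proposed block --- ``$\min\sigma_1$ still belongs to $\sigma'$, so $i$ cannot lie below $\min\sigma'$'' --- compares $i$ with the wrong quantity: membership in $C_2$ is governed by $\min\sigma_3$, and the containment $\sigma_3\subseteq\sigma'\cup\sigma_2$ of Lemma~\ref{lem:cell_containment} does not force $\min\sigma_3\leq\min\sigma'$ (e.g.\ $\sigma_3$ could lie entirely in $\sigma_2$), so $i<\min\sigma_3$ remains possible; moreover, for the $\tau$-type omitted terms ($i\in\tau_1$, $i<\max\tau_1$) you rule out only $C_1$, and nothing in your sketch prevents $i<\min\sigma_3$ with $\{i,t\}\in E(G)$ there. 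The paper closes this case not by an inequality on supports but by a Morse-matching contradiction: assuming $c(x_ix_k\resbasis{\sigma_3}{\tau_3})\neq 0$ via $C_2$, it shows that $k<i$ would give $k\in\pnbhd(t)$ (via the monotonicity of pre-neighbourhoods, Lemma~\ref{lem:pnbhd}), forcing $x_k\resbasis{\sigma_3}{\tau_3}\in M^{-}$ --- impossible, since terms of a product lie in $\im c$ and hence in $M^{+}$ --- while $i\leq k$ forces $i=\min\sigma_1$, contradicting that $i$ indexes an omitted term. This interplay between the $M^{+}/M^{-}$ dichotomy and Lemma~\ref{lem:pnbhd} is the essential missing ingredient; the degree and containment bookkeeping you rely on cannot by itself establish the invariant you are hoping for.
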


\begin{proof}
  Consider the difference
  \[
  c(d(\resbasis{\sigma_1}{\tau_1}) \star \resbasis{\sigma_2}{\tau_2})
  -
  c(\partial(\resbasis{\sigma_1}{\tau_1}) \star \resbasis{\sigma_2}{\tau_2})
  =
  c((d(\resbasis{\sigma_1}{\tau_1}) - \partial(\resbasis{\sigma_1}{\tau_1})) \star \resbasis{\sigma_2}{\tau_2})
  \]
  A term occuring in
  $d(\resbasis{\sigma_1}{\tau_1}) - \partial(\resbasis{\sigma_1}{\tau_1})$
  is either of the form $x_i\resbasis{\sigma_1\smallsetminus i}{\tau_1}$
  where $i > \min(\sigma)$ or
  $x_i\resbasis{\sigma_1}{\tau_1\smallsetminus i}$ where $i < \max(\tau_1)$.

  Now assume that
  $x_k\resbasis{\sigma_3}{\tau_3}$ occurs in a product
  $\resbasis{\sigma_1\smallsetminus i}{\tau_1} \star
  \resbasis{\sigma_2}{\tau_2}$ or
  $\resbasis{\sigma_1}{\tau_1\smallsetminus i} \star
  \resbasis{\sigma_2}{\tau_2}$, and assume further that
  $c(x_ix_k\resbasis{\sigma_3}{\tau_3}) \neq 0$. This means that
  either (i), $i \geq k$ and $i > \max(\tau_3)$, which implies that
  $i = \max(\tau_1)$, or (ii), $\abs{\tau_3} = 1$, and
  $i < \min(\sigma_3)$.
  Now, in case (ii), if $k < i$, we would have that $\tau_3 = \{t\}$ where
  $t = \max (\tau_1 \cup \tau_2)$, so by Lemma~\ref{lem:pnbhd} it would
  be the case that
  $k \in \pnbhd(t)$, but then $x_k\resbasis{\sigma_3}{\tau_3} \in M^{-}$
  which contradicts that $x_k\resbasis{\sigma_3}{\tau_3} \in M^{+}$.
  Thus $i \leq k$, so $i = \min \sigma_1$.
\end{proof}

We will now give an explicit description of the multiplication
in the simplest non-trivial case.

\begin{Lem} \label{lem:mult_bottom_degree}
  Let $\resbasis{s_1}{t_1}$ and $\resbasis{s_2}{t_2}$ be basis elements
  of degree 1 in $\cpx{F}$. Then  
  \[
  \resbasis{s_1}{t_1} \star \resbasis{s_2}{t_2} =
  \begin{cases}
    x_{s_1}\resbasis{s_2}{t_2t_1} + x_{t_2}\resbasis{s_1s_2}{t_1}
    & \quad t_1 > t_2, s_1 < s_2, \\
    x_{s_1}\resbasis{s_2}{t_2t_1}
    & \quad t_1 > t_2, s_1 = s_2, \\
    x_{s_1}\resbasis{s_2}{t_2t_1} - x_{t_2}\resbasis{s_2s_1}{t_1}
    & \quad t_1 > t_2, s_1 > s_2, \\
    x_{t_1}\resbasis{s_1s_2}{t_2}
    & \quad t_1 = t_2, s_1 < s_2, \\
    0 & \quad t_1 = t_2, s_1 = s_2, \\
    -x_{t_2}\resbasis{s_2s_1}{t_1}
    & \quad t_1 = t_2, s_1 > s_2, \\
    x_{t_1}\resbasis{s_1s_2}{t_2} - x_{s_2}\resbasis{s_1}{t_1t_2}
    & \quad t_1 < t_2, s_1 < s_2, \\
    - x_{s_2}\resbasis{s_1}{t_1t_2}
    & \quad t_1 < t_2, s_1 = s_2, \\
    - x_{s_2}\resbasis{s_1}{t_1t_2} - x_{t_1}\resbasis{s_2s_1}{t_2}
    & \quad t_1 < t_2, s_1 > s_2.
  \end{cases}
  \]
\end{Lem}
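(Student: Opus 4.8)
The plan is to unwind the definition of $\star$ via Lemma~\ref{lem:maclane} and then evaluate the homotopy $c$ explicitly. Abbreviate $x = \resbasis{s_1}{t_1}$ and $y = \resbasis{s_2}{t_2}$; since both have degree $1$, the tensor $x\otimes y$ is a $k$-basis element of $(\cpx{F}\otimes_S\cpx{F})_2$, so the lifting $\mu$ (characterised by $\mu_{n+1}(\bar u) = c\,\mu_n d(\bar u)$ on $k$-basis elements $\bar u$) satisfies $x\star y = \mu(x\otimes y) = c\,\mu_1 d(x\otimes y)$. Because $\card{x} = 1$ and $d\resbasis{s}{t} = x_sx_t \in F_0$, the differential of the tensor complex gives
\[
d(x\otimes y) = x_{s_1}x_{t_1}\,(1\otimes y) - x_{s_2}x_{t_2}\,(x\otimes 1).
\]
Now $\mu_1(1\otimes y) = c\,\mu_0 d(1\otimes y) = c(x_{s_2}x_{t_2})$, and since the only generator of $I_G$ dividing $x_{s_2}x_{t_2}$ is $x_{s_2}x_{t_2}$ itself, this equals $\resbasis{s_2}{t_2}$; symmetrically $\mu_1(x\otimes 1) = \resbasis{s_1}{t_1}$ (these are of course the identities $1\star y = y$ and $x\star 1 = x$ of Lemma~\ref{lem:dga_basics}). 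Applying the $S$-linear map $\mu_1$ therefore reduces the whole computation to
\[
x\star y = c\bigl(x_{s_1}x_{t_1}\resbasis{s_2}{t_2}\bigr) - c\bigl(x_{s_2}x_{t_2}\resbasis{s_1}{t_1}\bigr).
\]

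Each of these two terms is $c$ evaluated on a monomial multiple of a degree-$1$ basis element, i.e.\ on an element with $\card{\tau} = 1$, so I would read off its value from the corresponding case formula for $c$. For the first term one has $\supp\alpha = \{s_1,t_1\}$, $\sigma = \{s_2\}$, $\tau = \{t_2\}$, and for the second $\supp\alpha = \{s_2,t_2\}$, $\sigma = \{s_1\}$, $\tau = \{t_1\}$. The nine cases of the statement are precisely the nine combinations of the order of $t_1$ against $t_2$ with the order of $s_1$ against $s_2$; in each, the case hypotheses together with $s_1 < t_1$ and $s_2 < t_2$ fix enough of the total order on $\{s_1,s_2,t_1,t_2\}$ to decide which of $C_1, C_2, C_3$ are empty. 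Substituting the resulting $m_1, m_2, m_3$ into the formula for $c$ then yields the tabulated expressions, the signs being produced by the factors $(-1)^{\card{\sigma}+1}$ with $\card{\sigma}=1$ throughout.

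The genuine geometric input, beyond this bookkeeping, is the cointerval hypothesis entering through Lemma~\ref{lem:pnbhd}: whenever $t_2 < t_1$ one has $\pnbhd(t_2)\subseteq\pnbhd(t_1)$, so an edge $\{s_2,t_2\}$ forces the edge $\{s_2,t_1\}$, and it is exactly such forced edges that make the relevant $C_2$ nonempty and guarantee that symbols like $\resbasis{s_1 s_2}{t_1}$ occurring in the answer are legitimate basis elements. I expect this to be the main point to verify carefully rather than the arithmetic: in each case one must check that the extremal element $m_i$ is pinned down by the case hypotheses and is insensitive to the orderings they leave open (for example the position of $s_1$ relative to $t_2$ when $t_1 > t_2$ and $s_1 > s_2$, which changes $C_1$ and $C_2'$ as sets but not the values of $m_1$ and $m_2$), and that the forced edges keep every quotient $x^{\alpha}/x_{m_i}$ and every symbol in the output well-defined. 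Granting these checks, each of the nine cases collapses to a single substitution matching the claimed value.
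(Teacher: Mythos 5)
Your proposal is correct and follows essentially the same route as the paper: the paper's proof consists precisely of the identity $\resbasis{s_1}{t_1} \star \resbasis{s_2}{t_2} = c(x_{s_1}x_{t_1}\resbasis{s_2}{t_2}) - c(x_{s_2}x_{t_2}\resbasis{s_1}{t_1})$, which you derive in more detail from Lemma~\ref{lem:maclane} and the unit property, followed by an appeal to the definition of $c$ for the nine cases. Your additional remarks on pinning down $m_1,m_2,m_3$ and on Lemma~\ref{lem:pnbhd} guaranteeing that the output symbols are legitimate basis elements are exactly the checks the paper leaves implicit.
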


\begin{proof}
  By the definition of the product map
  \[
  \resbasis{s_1}{t_1} \star \resbasis{s_2}{t_2} =
  c(x_{s_1}x_{t_1}\resbasis{s_2}{t_2}) - c(x_{s_2}x_{t_2} \resbasis{s_1}{t_1})
  \]
  from which the statement follows by using the definition of $c$.
\end{proof}

\begin{Lem} \label{lem:cell_containment}
  Let $\resbasis{\sigma_1}{\tau_1}$ and $\resbasis{\sigma_2}{\tau_2}$
  be basis elements of $\cpx{F}$. If $x_k\resbasis{\sigma_3}{\tau_3}$
  occurs in $\resbasis{\sigma_1}{\tau_1} \star \resbasis{\sigma_2}{\tau_2}$,
  then $\sigma_3 \subseteq \sigma_1 \cup \sigma_2$ and
  $\tau_3 \subseteq \tau_1 \cup \tau_2$.
\end{Lem}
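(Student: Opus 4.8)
The plan is to induct on the total homological degree $\card{\sigma_1\cup\tau_1}+\card{\sigma_2\cup\tau_2}$, using the recursive description $\phi_{n+1}(\bar x)=c\phi_n d(\bar x)$ of the product furnished by Lemma~\ref{lem:maclane}. The base case, in which both factors have degree $1$, is exactly Lemma~\ref{lem:mult_bottom_degree}: one reads off that in each of its nine cases every term $x_k\resbasis{\sigma_3}{\tau_3}$ has $\sigma_3\subseteq\{s_1,s_2\}$ and $\tau_3\subseteq\{t_1,t_2\}$. For the inductive step I would replace the true differential by the two-term map $\partial$ via Lemma~\ref{lem:partial_diff} (and its mirror image, obtained from the commutativity in Lemma~\ref{lem:dga_basics}(2)), writing
\[
\resbasis{\sigma_1}{\tau_1}\star\resbasis{\sigma_2}{\tau_2}
= c\bigl(\partial\resbasis{\sigma_1}{\tau_1}\star\resbasis{\sigma_2}{\tau_2}\bigr)
\pm c\bigl(\resbasis{\sigma_1}{\tau_1}\star\partial\resbasis{\sigma_2}{\tau_2}\bigr).
\]
Each summand of $\partial\resbasis{\sigma_1}{\tau_1}$ (and symmetrically of $\partial\resbasis{\sigma_2}{\tau_2}$) is $x_\ell\resbasis{\sigma_1'}{\tau_1'}$ with $\ell\in\sigma_1\cup\tau_1$, $\sigma_1'\subseteq\sigma_1$ and $\tau_1'\subseteq\tau_1$, so the induction hypothesis applies to the lower-degree product $\resbasis{\sigma_1'}{\tau_1'}\star\resbasis{\sigma_2}{\tau_2}$. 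Multiplying its terms by $x_\ell$ restores the full multidegree $\multideg\bigl(\prod_{i\in\sigma_1\cup\tau_1}x_i\bigr)+\multideg\bigl(\prod_{i\in\sigma_2\cup\tau_2}x_i\bigr)$, so the whole problem reduces to this: given a monomial multiple $x^\delta\resbasis{\sigma_3}{\tau_3}$ of that multidegree with $\sigma_3\subseteq\sigma_1\cup\sigma_2$ and $\tau_3\subseteq\tau_1\cup\tau_2$, show that every term of $c(x^\delta\resbasis{\sigma_3}{\tau_3})$ again satisfies the two containments.

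The decisive structural input is that in any basis element $\max\sigma<\min\tau$, so that $\supp\delta\subseteq\sigma_1\cup\tau_1\cup\sigma_2\cup\tau_2$ is squeezed into the interval $[a,b]$ with $a=\min(\sigma_1\cup\sigma_2)$ and $b=\max(\tau_1\cup\tau_2)$. The $\tau$-side is then immediate: since $b$ is too large to lie in any $\sigma_i$, it is not in $\sigma_3$, so the multidegree forces $b\in\tau_3$ or $b\in\supp\delta$. In the first case $\max\tau_3=b$, whence $C_1=\emptyset$ and $c$ adds nothing to $\tau$; in the second case $m_1=\max C_1=b$, because no support index exceeds $b$, so the only index $c$ can ever append to $\tau$ is $b\in\tau_1\cup\tau_2$. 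Thus $c$ never pushes $\tau_3$ outside $\tau_1\cup\tau_2$.

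For the $\sigma$-side I would run the mirror argument for $a$. A short check using $\max\sigma<\min\tau$ in each factor shows that $a$ can be a $\tau$-index of neither factor, so $a\notin\tau_3$, and the multidegree gives $a\in\sigma_3$ or $a\in\supp\delta$. If $a\in\sigma_3$ then $\min\sigma_3=a$ and no support index lies below it, so $C_2=C_3=\emptyset$ and $c$ leaves $\sigma$ untouched. If instead $a\in\supp\delta$, then $a$ is the natural candidate for the index $c$ appends to $\sigma$, and whenever the relevant adjacency condition holds it is literally $m_2=\min C_2$ or $m_3=\min C_3$, hence lies in $\sigma_1\cup\sigma_2$. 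The one genuine obstacle is the residual possibility that the adjacency conditions defining $C_2$ and $C_3$ — namely $\{i,\min\tau_3\}\in E(G)$ and $\{i,\max\supp\delta\}\in E(G)$ — fail for $a$ yet hold for some strictly larger support index, so that $c$ tries to append an index $m_2$ or $m_3$ bigger than $a$; I must then show that this index is still a true $\sigma$-index of one of the factors and not a $\tau$-index that has slipped in. I expect to settle this exactly as in the proof of Lemma~\ref{lem:partial_diff}: translating the failed non-edge into overlapping intervals and invoking the monotonicity of pre-neighbourhoods from Lemma~\ref{lem:pnbhd} to contradict the non-edge witnessed by $m_2$ (resp.\ $m_3$). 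Carrying out this comparison of interval endpoints is the main calculation; once it is in place, inspecting the four branches of $c$ for $\card{\tau}=1$ and the single branch for $\card{\tau}\geq2$ confirms that both containments are preserved, closing the induction.
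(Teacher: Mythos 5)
Your overall strategy --- induction on the total degree, Lemma~\ref{lem:mult_bottom_degree} as the base case, replacing $d$ by $\partial$ via Lemma~\ref{lem:partial_diff} together with graded commutativity, and applying the induction hypothesis to the inner, lower-degree products before hitting the result with $c$ --- is exactly the paper's. But the argument is not complete: you yourself isolate ``the one genuine obstacle'', namely that when $c$ is applied to $x^{\delta}\resbasis{\sigma_3}{\tau_3}$ the index $m_2$ or $m_3$ it appends to $\sigma_3$ might be an element of $\tau_1\cup\tau_2$ sitting in $\supp\delta$ below $\min\sigma_3$, and you defer its resolution to an unexecuted ``comparison of interval endpoints'' modelled on Lemma~\ref{lem:partial_diff}. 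That deferred step is precisely the content of the lemma on the $\sigma$-side, so as written the proof has a gap at its decisive point.

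The gap is also harder to close from where you stand than from where the paper stands, because your reduction discards information the paper keeps. You reduce to the claim that $c$ preserves both containments on \emph{any} $x^{\delta}\resbasis{\sigma_3}{\tau_3}$ of the correct multidegree; the paper instead records that the coefficient is exactly $x_{t}x_{l}$ or $x_{s}x_{l}$, with $t=\max\tau_1$, $s=\min\sigma_1$, and $x_l$ the single variable carried by a term of the inner product. With that in hand the case analysis of $c$ collapses without any graph-theoretic input: the only index ever appended to $\tau$ is $t\in\tau_1$; the only index appended to $\sigma$ in the $x_s$-branch is $s\in\sigma_1$; and in the one remaining term $v_2=x_m\resbasis{l\cup\sigma_4}{t}$ the condition $l<\min\sigma_4$ (hence $l<\min\tau_4$), combined with the multidegree, forces $l=\min(\sigma_1\cup\sigma_2\cup\tau_1\cup\tau_2)$, which lies in $\sigma_1\cup\sigma_2$ simply because $\max\sigma<\min\tau$ in every basis element. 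No interval-overlap or pre-neighbourhood argument is needed. If you want to keep your formulation, you must at least retain the fact that one of the two variables in $\supp\delta$ is $\min\sigma_1$ or $\max\tau_1$; without it the reduced statement you pose is not evidently true, and your $\sigma$-side sketch cannot be completed as stated.
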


\begin{proof}
  We use induction over
  $d = \deg \resbasis{\sigma_1}{\tau_1} + \deg \resbasis{\sigma_2}{\tau_2}$.
  If $d = 2$, the claim follows from Lemma~\ref{lem:mult_bottom_degree}.
  If $d \geq 3$,
  we look at
  $c(\partial(\resbasis{\sigma_1}{\tau_1})\star\resbasis{\sigma_2}{\tau_2})$.
  In the case of $\deg\resbasis{\sigma_1}{\tau_1} = 1$, so
  $\resbasis{\sigma_1}{\tau_1} = \resbasis{i}{j}$, it is equal to
  $c(x_ix_j\resbasis{\sigma_2}{\tau_2})$. The only terms that could
  occur in $c(x_ix_j\resbasis{\sigma_2}{\tau_2})$ are
  $x_j\resbasis{i \cup \sigma_2}{\tau_2}$,
  $x_i\resbasis{\sigma_2}{\tau_2\cup j}$ and
  $x_l\resbasis{i \cup\sigma_2}{\tau_2\smallsetminus l \cup j}$, all
  of which satisfy the statement of the lemma.

  Next we turn to the case of $\deg\resbasis{\sigma_1}{\tau_1} \geq 2$,
  and, letting $s = \min \sigma_1, t = \max \tau_1$,  we consider
  \[
  c(\partial(\resbasis{\sigma_1}{\tau_1})\star\resbasis{\sigma_2}{\tau_2} =
  c(x_{t} \resbasis{\sigma_1}{\tau_1 \smallsetminus t}\star\resbasis{\sigma_2}{\tau_2})
  \pm
  c(x_{s} \resbasis{\sigma_1 \smallsetminus s}{\tau_1}\star\resbasis{\sigma_2}{\tau_2}).
  \]
  First, suppose that $x_l \resbasis{\sigma_4}{\tau_4}$ occurs in
  $\resbasis{\sigma_1}{\tau_1 \smallsetminus t}\star\resbasis{\sigma_2}{\tau_2}$,
  then the only terms that can occur in
  $c(x_{t}x_l \resbasis{\sigma_4}{\tau_4})$ are
  $v_1 = x_l \resbasis{\sigma_4}{\tau_4\cup t}$ and
  $v_2 = x_m \resbasis{l \cup \sigma_4}{t}$. Note that if $v_2$ occurs,
  we must have $l = \min(\sigma_1 \cup \sigma_2 \cup \tau_1 \cup \tau_2)$,
  so $l \in \sigma_1 \cup \sigma_2$.
  Next suppose that
  $x_l \resbasis{\sigma_4}{\tau_4}$ occurs in
  $\resbasis{\sigma_1 \smallsetminus s}{\tau_1}\star\resbasis{\sigma_2}{\tau_2}$,
  the only term that can occur in   $c(x_{s}x_l \resbasis{\sigma_4}{\tau_4})$ is then
  $v_3 = x_l \resbasis{s\cup\sigma_4}{\tau_4}$.
  By induction, we have in both cases that
  $\sigma_4 \subseteq \sigma_1 \cup \sigma_2$ and
  $\tau_4 \subseteq \tau_1 \cup \tau_2$, so
  all of  $v_1$, $v_2$ and $v_3$ satisfy the conclusions of the lemma, and since the
  multiplication is graded commutative, the above argument also shows that
  all terms occuring in
  $c(\resbasis{\sigma_1}{\tau_1}\star\partial(\resbasis{\sigma_2}{\tau_2}))$
  also satisfy the conclusion of the lemma, and thus, by invoking Lemma~\ref{lem:partial_diff}, we have shown that all
  terms occuring in
  $\resbasis{\sigma_1}{\tau_1}\star\resbasis{\sigma_2}{\tau_2}$ satisfy
  the conclusion of the lemma, and we are done.
\end{proof}

\begin{Lem} \label{lem:cell_cardinality}
  For basis elements $\resbasis{\sigma_1}{\tau_1}$,
  $\resbasis{\sigma_2}{\tau_2}$ in
  $\cpx{F}$, we have that if $x_k\resbasis{\sigma_3}{\tau_3}$ occurs in the
  product $\resbasis{\sigma_1}{\tau_1} \star \resbasis{\sigma_2}{\tau_2}$, then
  $\max \tau_3 = \max (\tau_1 \cup \tau_2)$
  and
  $\card{\tau_3} \geq \card{\tau_1} + \card{\tau_2} - 1$.
\end{Lem}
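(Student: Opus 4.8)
The plan is to mirror the inductive structure of Lemma~\ref{lem:cell_containment}, performing a double induction on $d = \deg\resbasis{\sigma_1}{\tau_1} + \deg\resbasis{\sigma_2}{\tau_2}$ and using Lemma~\ref{lem:partial_diff} to replace $d$ by the simpler operator $\partial$. The base case $d = 2$ is read off directly from the explicit formulas in Lemma~\ref{lem:mult_bottom_degree}: in each of the nine cases one checks that every term $x_k\resbasis{\sigma_3}{\tau_3}$ has $\max\tau_3 = \max(t_1 \cup t_2)$ (the larger of the two singleton $\tau$'s survives as the maximum) and that $\card{\tau_3} \geq \card{t_1} + \card{t_2} - 1 = 1$, which is automatic since $\tau_3$ is nonempty.

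For the inductive step I would, by graded commutativity (Lemma~\ref{lem:dga_basics}(2)), assume $\deg\resbasis{\sigma_1}{\tau_1} \geq 1$ and expand $c(\partial(\resbasis{\sigma_1}{\tau_1}) \star \resbasis{\sigma_2}{\tau_2})$. When $\deg\resbasis{\sigma_1}{\tau_1} = 1$, so $\resbasis{\sigma_1}{\tau_1} = \resbasis{i}{j}$, the product reduces to $c(x_ix_j\resbasis{\sigma_2}{\tau_2})$, whose only possible terms are the three listed in the proof of Lemma~\ref{lem:cell_containment}, namely $x_j\resbasis{i\cup\sigma_2}{\tau_2}$, $x_i\resbasis{\sigma_2}{\tau_2\cup j}$, and $x_l\resbasis{i\cup\sigma_2}{\tau_2\smallsetminus l \cup j}$. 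Here I would track what happens to $\max\tau_3$: in each case either $\tau_3 = \tau_2$ or $\tau_3$ is obtained from $\tau_2$ by adjoining $j$ and possibly removing a smaller element $l < j = \max\tau_1$, so $\max\tau_3 = \max(\tau_1 \cup \tau_2)$ and $\card{\tau_3} \geq \card{\tau_2} = \card{\tau_1} + \card{\tau_2} - 1$.

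For $\deg\resbasis{\sigma_1}{\tau_1} \geq 2$, I would set $t = \max\tau_1$, $s = \min\sigma_1$ and split $\partial\resbasis{\sigma_1}{\tau_1}$ into its two summands, applying the induction hypothesis to the degree-$(d-1)$ products $\resbasis{\sigma_1}{\tau_1\smallsetminus t}\star\resbasis{\sigma_2}{\tau_2}$ and $\resbasis{\sigma_1\smallsetminus s}{\tau_1}\star\resbasis{\sigma_2}{\tau_2}$. The key observation is that in the second summand the $\tau$-part of $\resbasis{\sigma_1\smallsetminus s}{\tau_1}$ is unchanged, so $\max(\tau_1 \cup \tau_2)$ is preserved; the delicate summand is the first, where $\tau_1$ has lost its maximal element $t$. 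There, using the notation $v_1 = x_l\resbasis{\sigma_4}{\tau_4\cup t}$ and $v_2 = x_m\resbasis{l\cup\sigma_4}{t}$ from Lemma~\ref{lem:cell_containment}, I note that applying $c$ to $x_t x_l\resbasis{\sigma_4}{\tau_4}$ reintroduces $t$ into the $\tau$-part (via $C_1$, since $t = \max(\supp\alpha)$ here), so $\max\tau_3 = t = \max(\tau_1 \cup \tau_2)$ is restored, and the cardinality bound $\card{\tau_4\cup t} \geq \card{\tau_4} + 1 \geq (\card{\tau_1\smallsetminus t} + \card{\tau_2} - 1) + 1 = \card{\tau_1} + \card{\tau_2} - 1$ follows from the inductive estimate on $\tau_4$.

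I expect the main obstacle to be verifying precisely why, after $t$ is removed from $\tau_1$, the homotopy $c$ always reinserts it (rather than producing a term of strictly smaller maximum or lower $\tau$-cardinality). This hinges on showing that $t$ remains the maximal index in $\supp\alpha$ for every intermediate monomial to which $c$ is applied, so that $C_1 \neq \emptyset$ with $m_1 = t$ in the relevant branches of the definition of $c$; this is exactly the content guaranteed by the structure of $\partial$ feeding the largest available exponent $x_t$ back in, and by Lemma~\ref{lem:partial_diff}, which licenses the replacement of $d$ by $\partial$ so that only these two well-behaved summands need to be analysed. Once this monotonicity of the maximum is established, both conclusions of the lemma drop out of the induction.
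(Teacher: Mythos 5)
Your proposal takes a genuinely different route from the paper, and as written it has a real gap. The paper does not induct at all for this lemma: both claims are deduced directly from Lemma~\ref{lem:cell_containment}. For the first claim, if $\max \tau_3 \neq \max(\tau_1\cup\tau_2)$ then, since $\tau_3 \subseteq \tau_1\cup\tau_2$, $\sigma_3\subseteq\sigma_1\cup\sigma_2$ and the multidegree is preserved, the only place the variable $\max(\tau_1\cup\tau_2)$ can appear is as the coefficient, forcing $k=\max(\tau_1\cup\tau_2)$; but then $x_k\resbasis{\sigma_3}{\tau_3}$ is matched by $M_1$, i.e.\ lies in $M^-$, contradicting that every term of a product lies in $\im c \subseteq M^+$. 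For the second claim the paper uses pure counting: the homological degree identity $\card{\sigma_3}+\card{\tau_3}=\card{\sigma_1}+\card{\tau_1}+\card{\sigma_2}+\card{\tau_2}-1$ together with $\card{\sigma_3}\leq\card{\sigma_1}+\card{\sigma_2}$ (from $\sigma_3\subseteq\sigma_1\cup\sigma_2$) gives $\card{\tau_3}\geq\card{\tau_1}+\card{\tau_2}-1$ immediately. You re-run the recursive machinery of Lemma~\ref{lem:cell_containment} instead of using its conclusion, which is far more work and, more importantly, does not close.

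The gap is in your inductive step for the first summand $c(x_t\resbasis{\sigma_1}{\tau_1\smallsetminus t}\star\resbasis{\sigma_2}{\tau_2})$. You only track the outcome $x_l\resbasis{\sigma_4}{\tau_4\cup t}$ coming from the $C_1$ branch of $c$. But when $\card{\tau_4}=1$ the definition of $c$ can simultaneously fire the $C_3$ branch and produce the term $v_2 = x_m\resbasis{l\cup\sigma_4}{t}$ with $\card{\tau_3}=1$ (such terms genuinely occur, e.g.\ $x_{t_2}\resbasis{s_1 s_2}{t_1}$ in Lemma~\ref{lem:mult_bottom_degree}). Your inductive hypothesis only gives $\card{\tau_4}\geq\card{\tau_1}+\card{\tau_2}-2$, so $\card{\tau_4}=1$ is compatible with $\card{\tau_1}+\card{\tau_2}=3$, in which case $v_2$ would violate the bound $\card{\tau_3}\geq 2$. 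To finish you would have to prove that the $C_3$ branch cannot fire in that situation, and the natural way to see this is exactly the containment-plus-counting argument you are trying to avoid. A second, smaller omission: your claim that $c$ ``reinserts $t$ as the new maximum'' presumes $t=\max(\tau_1\cup\tau_2)$; when $\max\tau_2>\max\tau_1$ the element $t$ is not the one that must end up as $\max\tau_3$, and that case needs a separate (if easy) analysis. I recommend replacing the induction by the two direct arguments above.
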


\begin{proof}
  For the first claim we observe that if
  $\max \tau_3 \neq \max (\tau_1 \cup \tau_2)$, then $k =  \max(\tau_1 \cup \tau_2)$,
  which would imply that $x_k\resbasis{\sigma_3}{\tau_3} \in M^{-}$, but
  since $x_k\resbasis{\sigma_3}{\tau_3} \in \im c$, we know that
  $x_k\resbasis{\sigma_3}{\tau_3}\in M^{+}$.

  For the second claim, we observe that
  \[
  \card{\sigma_3} + \card{\tau_3} =
  \card{\sigma_1} + \card{\tau_1} +
  \card{\sigma_2} + \card{\tau_2} - 1
  \]
  so by Lemma~\ref{lem:cell_containment}
  \begin{align*}
    \card{\tau_3} &=
    (\card{\sigma_1} + \card{\sigma_2} - \card{\sigma_3}) +
    \card{\tau_1} + \card{\tau_2} - 1 \\
                 & \geq
                 \card{\tau_1} + \card{\tau_2} - 1.
  \end{align*}
\end{proof}

\begin{Lem} \label{lem:im_c}
  Let $\resbasis{\sigma_1}{\tau_1}$, $\resbasis{\sigma_2}{\tau_2}$ and
  $\resbasis{\sigma_3}{\tau_3}$ be basis elements of $\cpx{F}$, then
  $\resbasis{\sigma_1}{\tau_1} \star \big(\resbasis{\sigma_2}{\tau_2} \star
  \resbasis{\sigma_3}{\tau_3}\big) \in \im c$.
\end{Lem}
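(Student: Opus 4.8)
The plan is to reduce the claim to showing that $c$ annihilates the triple product, and then to show that \emph{every} monomial term of the product is a source cell of the matching. Because all three factors have homological degree at least $1$ — if one equals the generator of $F_0$, Lemma~\ref{lem:dga_basics}(3) reduces the statement to a product of two basis elements, which lies in $\im c$ by construction of $\mu$ — the product $\resbasis{\sigma_1}{\tau_1}\star(\resbasis{\sigma_2}{\tau_2}\star\resbasis{\sigma_3}{\tau_3})$ sits in a strictly positive homological degree. In that range $\im c=\ker c$: from $c^2=0$ we have $\im c\subseteq\ker c$, while the homotopy identity $cd+dc=\id$ (valid in positive degrees as $\cpx F$ is a resolution) turns any $c$-cycle $z$ into $z=cd(z)\in\im c$. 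Since $c$ kills every $v\in M^{+}$, it therefore suffices to prove that each term of the triple product lies in $M^{+}$.

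The starting point is that $w:=\resbasis{\sigma_2}{\tau_2}\star\resbasis{\sigma_3}{\tau_3}\in\im c$ by Lemma~\ref{lem:maclane}, so $w$ is a $k$-combination of source cells $u_k=x^{\beta_k}\resbasis{\sigma_k}{\tau_k}$. Every source cell $x^{\beta}\resbasis{\sigma}{\tau}$ obeys the coefficient bound $\max\supp\beta\leq\max\tau$: an $M_1$-source has this by definition, and $M_2$- resp. $M_3$-sources lie in $M_1^{0}$ resp. $(M_1\cup M_2)^{0}$, whose defining condition is exactly $\max\supp\beta\leq\max\tau$. Expanding $S$-linearly,
\[
\resbasis{\sigma_1}{\tau_1}\star w=\sum_k\lambda_k\,x^{\beta_k}\bigl(\resbasis{\sigma_1}{\tau_1}\star\resbasis{\sigma_k}{\tau_k}\bigr),
\]
where each inner product again lies in $\im c$, so its terms $x_l\resbasis{\sigma_3'}{\tau_3'}$ are source cells. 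By Lemma~\ref{lem:cell_cardinality} they satisfy $\max\tau_3'=\max(\tau_1\cup\tau_k)=\max(\tau_1\cup\tau_2\cup\tau_3)$, the coefficient bound gives $l\leq\max\tau_3'$, and by Lemma~\ref{lem:cell_containment} $\sigma_3'\subseteq\sigma_1\cup\sigma_k$, $\tau_3'\subseteq\tau_1\cup\tau_k$. Multiplying by $x^{\beta_k}$ leaves $\sigma_3'$ and $\tau_3'$ fixed and, since $\max\supp\beta_k\leq\max\tau_k\leq\max\tau_3'$, keeps the whole coefficient supported below $\max\tau_3'$.

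Thus a general term of the triple product is $x^{\beta}\resbasis{\sigma_3'}{\tau_3'}$ with $\max\supp\beta\leq\max\tau_3'$, where the smaller-coefficient cell $x_l\resbasis{\sigma_3'}{\tau_3'}$ is already a source. When $\abs{\tau_3'}\geq 2$ this is immediate: $\max\tau_3'\geq\max\supp\beta$ makes the cell an $M_1$-source. The main obstacle is the terms with $\abs{\tau_3'}=1$, say $\tau_3'=\{t\}$ with $t=\max(\tau_1\cup\tau_2\cup\tau_3)$; these are not $M_1$-matched and must be shown to be $M_2$- or $M_3$-sources rather than $M^{-}$-targets. The danger is that enlarging the coefficient from $x_l$ to $x^{\beta}$ introduces an index $i\in\supp\beta\cap\nbhd(t)$ with $i<\min\sigma_3'$, turning the cell into an $M_2$-target. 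I plan to exclude this exactly as in the proof of Lemma~\ref{lem:partial_diff}: such an $i$ would have to satisfy $i\in\pnbhd(t)$ with $t=\max(\tau_1\cup\tau_2\cup\tau_3)$, and then Lemma~\ref{lem:pnbhd} contradicts the source-ness of the term $u_k$ of $w$ that contributed $x^{\beta_k}$. Ruling this out confines every $\abs{\tau_3'}=1$ term to $M_2\cup M_3$, so all terms are sources, $c$ annihilates the triple product, and it lies in $\im c$.
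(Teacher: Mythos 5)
Your overall architecture is the same as the paper's: in positive homological degree $\im c=\ker c$, so it suffices to show that every monomial term of the triple product is a source of the matching (lies in $M^{+}$), and the terms with $\card{\tau_3'}\geq 2$ are disposed of by the observation that all coefficients stay supported below $\max\tau_3'$ (a clean way to package what the paper does implicitly). The reduction and the identification of the crux -- the terms with $\card{\tau_3'}=1$ that might become $M_2$-targets -- are both correct.

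The gap is in the one step that carries the content of the lemma. Writing $u=x_j\resbasis{\sigma_4}{\tau_4}$ for the term of $\resbasis{\sigma_2}{\tau_2}\star\resbasis{\sigma_3}{\tau_3}$ that contributed the extra variable $x_j$, and $\tau_4=\{m_4\}$, $\tau_3'=\{t\}$ with $m_4=\max(\tau_2\cup\tau_3)\leq t=\max(\tau_1\cup\tau_2\cup\tau_3)$, you must contradict $u\in M^{+}$ by showing $j<\min\sigma_4$ and $jm_4\in E(G)$. Your plan derives $j\in\pnbhd(t)$ from the dangerous condition and then invokes Lemma~\ref{lem:pnbhd}; but that lemma gives $\pnbhd(m_4)\subseteq\pnbhd(t)$, which is the wrong direction -- membership in $\pnbhd(t)$ says nothing about $\pnbhd(m_4)$. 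This is exactly the opposite of the situation in Lemma~\ref{lem:partial_diff}, where the cell being contradicted has the \emph{larger} $\tau$-maximum, so the argument does not transfer ``exactly.'' The paper instead gets $jm_4\in E(G)$ from $j$ lying in $\sigma_2\cup\sigma_3$, hence being adjacent to $\min\tau_2$ or $\min\tau_3$ by condition (3) on basis elements, and only then applies Lemma~\ref{lem:pnbhd} upward to $m_4=\max(\tau_2\cup\tau_3)$. Moreover, the inequality $j<\min\sigma_4$ does not follow from $j<\min\sigma_3'$ alone: since $\sigma_3'\subseteq\sigma_1\cup\sigma_4$ one only gets $\min\sigma_3'\geq\min(\sigma_1\cup\sigma_4)$, and the paper needs a separate argument (that the inner coefficient index $l$ satisfies $l\geq\min\sigma_3'$, again via condition (3) and Lemma~\ref{lem:pnbhd}, this time applied to $\tau_3'=\{t\}$ where the direction is right) to conclude $\min\sigma_3'\leq\min\sigma_4$. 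Neither ingredient is in your sketch, so as written the decisive case is not proved.
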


\begin{proof}
  Suppose $x_j\resbasis{\sigma_4}{\tau_4}$ occurs in
  $\resbasis{\sigma_2}{\tau_2}\star\resbasis{\sigma_3}{\tau_3}$ and
  furthermore that $x_k\resbasis{\sigma_5}{\tau_5}$ occurs in
  $\resbasis{\sigma_1}{\tau_1}\star\resbasis{\sigma_4}{\tau_4}$.

  Suppose that $x_jx_k\resbasis{\sigma_5}{\tau_5} \not\in \im c$. Then
  we must have that $c(x_jx_k\resbasis{\sigma_5}{\tau_5}) \neq 0$,
  which can only happen if $c(x_j\resbasis{\sigma_5}{\tau_5}) \neq 0$.
  Since
  $\max \tau_5 = \max(\tau_1\cup\tau_4) = \max(\tau_1\cup\tau_2\cup\tau_3)$,
  we have that $j < \min \sigma_5$ and that $\card{\tau_5} = 1$, so by
  lemma~\ref{lem:cell_cardinality} this means that $\card{\tau_i} = 1$
  for $1 \leq i \leq 4$ and we can define $m_i$, $1 \leq i \leq 5$
  by $\{m_i\} = \tau_i$.

  It cannot be the case that $k < \min \sigma_5$, since that would imply
  that one of $km_1$, $km_2$, or $km_3$ is in
  $E(G)$, and thus, by Lemma~\ref{lem:pnbhd} and
  Lemma~\ref{lem:cell_containment}, that $km_5 \in E(G)$ which would mean that
  $x_k\resbasis{\sigma_5}{\tau_5} \in M^{-}$. This means that
  $\min \sigma_5 \leq \min \sigma_4$.

  Therefore we can conclude that $j < \min \sigma_5 \leq \min
  \sigma_4$,
  so we have that one of $jm_2$ and $jm_3$ is in $E(G)$, so
  $jm_4 \in E(G)$, and $x_j\resbasis{\sigma_4}{\tau_4} \in M^{-}$ which
  contradicts that $x_j\resbasis{\sigma_4}{\tau_4} \in \im c$ and thus is in
  $M^{+}$.
\end{proof}

\begin{Thm}\label{thm:dga}
  For a cointerval graph $G$, the minimal resolution $\cpx{F}$ of
  $I_G$ is a DGA over $S$.
\end{Thm}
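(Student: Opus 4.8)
The plan is to derive the theorem from Lemmas~\ref{lem:dga_basics}, \ref{lem:maclane} and~\ref{lem:im_c}. By Lemma~\ref{lem:dga_basics} the product $\star$ already satisfies the Leibniz rule, is graded commutative, and has $1$ as a two-sided unit; so in order to conclude that $(\cpx{F}, d, \star)$ is a DG-algebra it remains \emph{only} to establish associativity, $(x\star y)\star z = x\star(y\star z)$ for all basis elements $x$, $y$, $z$. Since $\star$ is $S$-bilinear, this will then extend to arbitrary elements.

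To prove associativity I would compare the two composites
\[
\mu_L = \mu\circ(\mu\otimes\id),\qquad \mu_R = \mu\circ(\id\otimes\mu),
\]
both regarded as maps $\cpx{F}\otimes_S\cpx{F}\otimes_S\cpx{F}\longrightarrow\cpx{F}$. Each is a chain map, being a composite of $\mu$ with a tensor product of chain maps, and in degree $0$ each restricts to the canonical isomorphism $S\otimes_S S\otimes_S S = S\longrightarrow S=F_0$. The triple tensor product is again of the form $S\otimes_k W_{\bullet}$, its $k$-basis in each homological degree being the elements $e_1\otimes e_2\otimes e_3$ with the $e_i$ basis elements of $\cpx{F}$. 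Hence Lemma~\ref{lem:maclane} applies with $\cpx{X}=\cpx{F}\otimes_S\cpx{F}\otimes_S\cpx{F}$ and $\cpx{Y}=\cpx{F}$: there is a \emph{unique} chain map lifting the degree-$0$ isomorphism that carries this $k$-basis into $\im c$. Consequently $\mu_L=\mu_R$ as soon as I check that both send every $e_1\otimes e_2\otimes e_3$ into $\im c$.

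For $\mu_R$ this is exactly Lemma~\ref{lem:im_c}, since $\mu_R(e_1\otimes e_2\otimes e_3)=e_1\star(e_2\star e_3)$. For $\mu_L$, where $\mu_L(e_1\otimes e_2\otimes e_3)=(e_1\star e_2)\star e_3$, I would invoke graded commutativity (Lemma~\ref{lem:dga_basics}(2)), which extends by $S$-bilinearity to give
\[
(e_1\star e_2)\star e_3 = (-1)^{(\abs{e_1}+\abs{e_2})\abs{e_3}}\, e_3\star(e_1\star e_2),
\]
and then apply Lemma~\ref{lem:im_c} once more, with the three factors permuted, to conclude that $e_3\star(e_1\star e_2)\in\im c$; as $\im c$ is a $k$-subspace, the sign is immaterial. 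Thus both $\mu_L$ and $\mu_R$ meet the hypotheses of Lemma~\ref{lem:maclane}, forcing $\mu_L=\mu_R$, which is precisely associativity.

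The genuinely hard work has already been carried out in the preceding lemmas, above all in Lemma~\ref{lem:im_c}, whose proof needed the combinatorics of the Morse matching together with Lemma~\ref{lem:pnbhd}; once that membership statement is in hand, the associativity argument above is purely formal, and I do not expect any further obstacle. Assembling associativity with the three parts of Lemma~\ref{lem:dga_basics} then shows that $\star$ makes $\cpx{F}$ into a commutative DG-algebra over $S$, completing the proof.
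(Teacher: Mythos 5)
Your argument is correct and follows the paper's proof essentially verbatim: associativity is obtained by comparing $\mu\circ(\mu\otimes\id)$ and $\mu\circ(\id\otimes\mu)$ via the uniqueness statement of Lemma~\ref{lem:maclane}, using Lemma~\ref{lem:im_c} together with graded commutativity to check that both composites send basis elements into $\im c$. The only detail the paper adds is a separate (trivial) treatment of the case where one of the $e_i$ has degree zero, since Lemma~\ref{lem:im_c} is stated only for basis elements $\resbasis{\sigma}{\tau}$ of positive degree.
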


\begin{proof}
  Lemma~\ref{lem:dga_basics} gives that the proposed multiplication
  has a unit, satisfies the Leibniz rule and is graded commutative.
  It thus remains to see associativity. To this end we look at the two
  chain maps
  \[
  \mu \circ (\mu \otimes 1), \mu \circ (1 \otimes \mu):
  \cpx{F} \otimes_S \cpx{F} \otimes_S \cpx{F} \longrightarrow \cpx{F}.
  \]
  Since they agree in degree 0; Lemma~\ref{lem:maclane} tells us that
  it is enough to show that the images of basis elements under both
  maps lie in $\im c$.

  Let $e_1$, $e_2$ and $e_3$ be
  basis elements of $\cpx{F}$. If any of them is of degree zero, and
  thus equal to $1$, it is by Lemma~\ref{lem:dga_basics} obvious that
  $e_1\star (e_2 \star e_3)$ and  $(e_1\star e_2) \star e_3$, lie in
  $\im c$, so let us
  assume that this is not the case. Then, by Lemma~\ref{lem:im_c} we
  know that
  \[
  e_1 \star (e_2 \star e_3) \in \im c.
  \]
  and that
  \[
  (e_1 \star e_2) \star e_3 =
  (-1)^{\abs{e_3}(\abs{e_1}+\abs{e_2})} e_3 \star (e_1 \star e_2)
  \in \im c.
  \]
\end{proof}

We conclude by calculating the full DGA-structure on the resolution
of the graph from Example~\ref{ex:myex}.
\begin{Ex}
  Continuing with our example, we have the following $S$-basis elements
  in the resolution:
  \begin{center}
  \begin{tabular}{l|l}
    Degree & Basis elements \\
    \hline
    1 & $\resbasis{1}{4}$, $\resbasis{2}{3}$, $\resbasis{2}{4}$, $\resbasis{3}{4}$ \\
    2 & $\resbasis{12}{4}$, $\resbasis{13}{4}$, $\resbasis{23}{4}$, $\resbasis{2}{34}$\\
    3 & $\resbasis{123}{4}$ \\
  \end{tabular}
  \end{center}
  We can now get the products of elements of degree 1 from
  Lemma~\ref{lem:mult_bottom_degree}. Since the product is graded commutative
  we have zeros on the diagonal, and elements below the diagonal are the
  negative of their transposes, so we do not include them in the table.

  \begin{center}
    \begin{tabular}{c|llll}
      $\star$ & $\resbasis{1}{4}$ & $\resbasis{2}{3}$ & $\resbasis{2}{4}$ & $\resbasis{3}{4}$ \\
      \hline
      $\resbasis{1}{4}$ & & $x_{1} \resbasis{2}{34} + x_{3} \resbasis{12}{4}$ & $x_{4} \resbasis{12}{4}$ & $x_{4} \resbasis{13}{4}$ \\
      $\resbasis{2}{3}$ & & & $-x_{2} \resbasis{2}{34}$  & $x_3 \resbasis{23}{4} - x_{3} \resbasis{2}{34}$ \\
      $\resbasis{2}{4}$ & & & & $x_{4} \resbasis{23}{4}$ \\
      $\resbasis{3}{4}$ & & & &
    \end{tabular}
  \end{center}
  Next we can compute the products of an element of degree 1 with an
  element of degree 2. From the $\nat^n$-homogeneity of $\star$ it follows
  that $\resbasis{\sigma_1}{\tau_1}\star\resbasis{\sigma_2}{\tau_2} = 0$
  if $\card{(\sigma_1 \cup \tau_1) \cap (\sigma_2 \cup \tau_2)} \geq 2$,
  so we leave those entries blank in the table, and only include entries
  which need to be calculated.

  \begin{center}
    \begin{tabular}{c|cccc}
      $\star$ & $\resbasis{12}{4}$ & $\resbasis{13}{4}$ & $\resbasis{23}{4}$ & $\resbasis{2}{34}$ \\
      \hline
      $\resbasis{1}{4}$ & & & $-x_{4} \resbasis{123}{4}$ & 0 \\
      $\resbasis{2}{3}$ & 0 & $x_{3} \resbasis{123}{4}$ & &  \\
      $\resbasis{2}{4}$ & & $x_{4} \resbasis{123}{4}$ & & \\
      $\resbasis{3}{4}$ & $-x_{4} \resbasis{123}{4}$ & & & \\
    \end{tabular}
  \end{center}
\end{Ex}

\bibliographystyle{amsalpha}
\bibliography{bibfil}

\end{document}